\newtheorem{theorem}{Theorem}[section]
\newtheorem{lemma}[theorem]{Lemma}
\newtheorem{proposition}[theorem]{Proposition}
\newtheorem{corollary}[theorem]{Corollary}
\theoremstyle{definition}
\newtheorem{definition}[theorem]{Definition}
\newcommand{\V}{\mathcal{V}}
\newcommand{\T}{\mathcal{T}}
\newcommand{\modd}{\ \mbox{mod} \ }
\begin{document}

\title{Representation zeta function of a family of maximal class groups: various exceptional cases}
\author{Shannon Ezzat}
\maketitle
\begin{center}
\emph{Address:} Cape Breton University\\
P.O.Box 5300, 1250 Grand Lake Road,\\
Sydney, Nova Scotia, Canada B1P 6L2\\
\emph{Email:} \href{mailto:shannon_ezzat@cbu.ca}{shannon\_ezzat@cbu.ca}
\end{center}
\begin{abstract}
This paper is a sequel to \emph{Representation zeta function of a family of maximal class groups: Non-exceptional primes} \cite{MaxClassI}. We use an explicitly constructive method developed in the prequel paper to calculate some exceptional cases of $p$-local representation zeta functions of a family of finitely generated nilpotent groups $M_n$ with maximal nilpotency class. We construct all irreducible representations of degree $p^N$ for all $N\in \mathbb{N}$ for the group $M_{p+1}$ for a fixed prime $p$. We also construct all irreducible representations of degree $2^N$ for the group $M_4$. Together with the main result from the prequel, this gives us a complete understanding of the irreducible representations of the groups $M_3$ and $M_4$, along with their global representation zeta functions.   
\end{abstract}
{\bf MSC Classification:} 20C15, 20F19\\
{\bf Keywords:} representation growth, nilpotent groups, representation zeta function, constructive method 

\section{Introduction}

Representation growth is an area within group theory where one studies (usually infinite) groups by considering the sequence of the number of (equivalence classes of) irreducible complex representations of degree $n$ for all $n \in \mathbb{N}.$ More formally, for a group $G$ and for all $n \in \mathbb{N}$ let $r_n(G)$ be the number of irreducible representations of degree $n$. If all $r_n(G)$ are finite, we can study this sequence by embedding them as coefficients in a zeta function.

Throughout this paper, all representations are complex representations. Two representations of a group $G,$ say $\rho_1$ and $\rho_2,$ are said to be \emph{twist-equivalent} if $\rho_1 = \chi\otimes \rho_2$ where $\chi$ is a representation of $G$ of degree 1. It is easy to see that there is only one twist isoclass of degree 1. For a finitely generated torsion-free nilpotent group $G$ all $r_n(G)$ are infinite. However, Lubotzky and Magid \cite{LM} show that if we redefine $r_n(G)$ to be the equivalence classes of representations of $G$ of degree $n$ up to twist equivalence then all $r_n(G)$ are finite. These are the objects we count in this paper. Additionally, they show that each irreducible representation of $G$ is twist equivalent to one that factors through a finite quotient of $G$.

Let $G$ be a $\T$-group and let the complex series
\begin{equation}
\zeta_{G}^{irr}(s)=\sum_{n=1}^{\infty} r_n(G)n^{-s}.
\end{equation}
We call $\zeta_{G}^{irr}(s)$ the \emph{(global) representation zeta function} of $G$. Additionally, let
\begin{equation}
\zeta_{G,p}^{irr}(s)=\sum_{n=0}^{\infty} r_{p^n}(G)p^{-ns}.
\end{equation}
We call this the \emph{$p$-local representation zeta function} of $G$.
Since all irreducible representations factor through a finite quotient and finite nilpotent groups are direct products of their Sylow $p$-subgoups, it follows that
\begin{equation}
\zeta_{G}^{irr}(s) = \prod_p\zeta_{G,p}^{irr}(s).
\end{equation}

In \cite[Theorem D]{Voll1}, Voll shows that for almost all primes,  $p$-local representation zeta functions of a $\T$-group $G$ satisfy a functional equation in the variable $p$, depending only on the Hirsch length of the derived subgroup of $G$. This functional equation has been refined in 
\cite[Theorem A]{SV} to include number theoretic information for groups that arise as, for a given number field, integer points of unipotent group schemes.

Define the group commutator $[a,b] = aba^{-1}b^{-1}$ and let $M_n := \langle a_1, \ldots, a_n,b \ | \ [a_i,b]=a_{i+1} \rangle$, where all commutators that do not appear in (or follow from) the group relations are trivial. This paper is a continuation of \cite{MaxClassI} in which we construct both the irreducible representations and the $p$-local zeta functions of the family of groups $M_n$ for almost all primes $p.$ While in the previous paper we calculated the $p$-local zeta functions when $p$ was non-exceptional, that is when $p \geq n$, in this paper we study some cases when $p$ is indeed exceptional.  For the rest of the paper, and with a slight abuse of notation, we denote the subgroup $\langle  a_{n-k+1}, a_{n-k+2}, \ldots, a_{n},b \rangle \leq M_n$  as $M_k.$ Our choice of group is notable as it is, in some sense and informally, the simplest family of $T$-groups of arbitrary nilpotency class and whose complexity is wholly derived from its nilpotency class, and not for any other number theoretic reasons. 

The study of representation growth, and asymptotic group theory in relation to the family of groups $M_n$,
has garnered significant interest within the mathematical community. See the prequel paper \cite[Introduction]{MaxClassI} for a comprehensive discussion of this, or such papers as \cite{SnockenThesis, zordan2022univariate, Rossmann16, Rossmann20, LMar,JZ,Voll3,Avnietal, Avni, Berman21} directly.

As to the knowledge of the author, there is no other ``small'' prime $p$-local representation zeta functions that appear in the literature. Indeed, both the Kirillov orbit method, first applied by Voll \cite{Voll1} to this use case, and Rossmann's topological representation zeta function machinery \cite{ROSSMANN2016210}, inherently exclude a finite number of primes and thus are unable to fully complete the calculation of global representation zeta functions in general. We stress that the aforementioned methods rely on deeper mathematical machinery and are applicable in a wide variety of cases. The method in this paper, which is much more elementary and deals with non-uniformity in the calculations in a largely ad-hoc manner, is applicable to small exceptional primes due to the lack of assumptions and mathematical sophistication. 


\subsection{Main Result}

Let $p_*$ be a fixed prime. The main result of this paper is the calculation of the $p_*$-local representation zeta function for $M_{p_*+1}$, which notably includes the $2$-local representation zeta function of $M_3$. We also calculate the $2$-local zeta function for $M_4$. Combining these results with the non-exceptional $p$-local representation zeta functions in \cite{MaxClassI}, this gives us the global representation zeta functions for $M_3$ and $M_4$. Note the uniformity in all of the $p$-local zeta functions for both groups. It is striking that the exceptional prime-local zeta functions are of the same form as the non-exceptional prime-local zeta functions. We have the following theorem:

\begin{theorem}
Let $p$ be a prime. Then the $p$-local representation zeta function of the group $M_{p+1}$ is
\[ \zeta^{irr}_{M_{p+1},p}(s) =\frac{(1-p^{-s})^2}{(1-p^{((p+1)-2)-s})(1-p^{1-s})}.
\]
\end{theorem}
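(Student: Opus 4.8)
The goal is to compute $\zeta^{irr}_{M_{p+1},p}(s)$, i.e., to count, for each $N \geq 0$, the number $r_{p^N}(M_{p+1})$ of twist-isoclasses of irreducible representations of degree $p^N$, and then to sum the generating series. Since $M_{p+1}$ is a $\mathcal{T}$-group, by Lubotzky–Magid every irreducible twist-isoclass is represented by one factoring through a finite ($p$-)quotient, so the whole problem reduces to the finite-quotient combinatorics that the constructive method of \cite{MaxClassI} is built to handle. My plan is: (i) recall the parametrization of irreducibles of $M_{p+1}$ from the constructive method in terms of "Smith Normal Form" data $\SNFR$ and $\SNFS$ together with a choice of suitable character on an abelian subgroup; (ii) identify exactly which configurations survive when $p = p$ is the exceptional prime for $M_{p+1}$ — this is where the case split happens, because the machinery of \cite{MaxClassI} was set up precisely to isolate the exceptional behaviour; (iii) count the surviving configurations of each degree $p^N$; (iv) assemble the counts into the claimed rational function.

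\textbf{The counting step.} The denominator $(1-p^{((p+1)-2)-s})(1-p^{1-s}) = (1-p^{p-1-s})(1-p^{1-s})$ strongly suggests that $r_{p^N}(M_{p+1})$ is, for large $N$, a quasi-polynomial governed by two "directions": one contributing a factor growing like $p^{(p-1)n}$ and one like $p^{n}$, with the numerator $(1-p^{-s})^2$ encoding a finite correction coming from the low-degree representations (degrees $1$, $p$, and the overlap). Concretely, I expect to show that for each $N$ the irreducibles of degree $p^N$ are counted by summing, over the ways $N$ splits according to the two structural parameters of an irreducible of $M_{p+1}$ (roughly: the "level" in the maximal-class tower, controlled by the index $p+1$, versus the "twist/extension" freedom, controlled by the abelian part giving the factor $p$), a product of two geometric-type counts. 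The key identity to verify is thus of the shape
\[
\sum_{N=0}^{\infty} r_{p^N}(M_{p+1}) \, p^{-Ns} \;=\; \frac{(1-p^{-s})^2}{(1-p^{p-1-s})(1-p^{1-s})},
\]
which I would prove by expanding the right-hand side as $\left(\sum_{i\ge0}p^{(p-1)i}p^{-is}\right)\left(\sum_{j\ge0}p^{j}p^{-js}\right)(1-p^{-s})^2$ and matching coefficients against the explicit count from step (iii).

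\textbf{Executing via the constructive method.} In more detail, following \cite{MaxClassI}: an irreducible representation of $M_{p+1}$ of degree $p^N$ is induced from a linear character of a finite-index abelian subgroup, and the constructive method organizes these by (a) a kernel/centre choice that fixes which quotient $M_k$ (in the re-indexed sense $\langle a_{n-k+1},\ldots,a_n,b\rangle$) the representation genuinely "lives on", and (b) within that, Smith-Normal-Form data $\SNFR, \SNFS$ for the relevant integer matrices describing the commutator action, whose elementary divisors determine both the degree $p^N$ and the number of extensions. I would first run through the non-exceptional template to see which steps break when $p = p+1-1$, i.e., when the nilpotency length $p+1$ is tied to the prime, then re-count the now-degenerate cases directly. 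The degeneracy should manifest as certain $\SNFR$-patterns that were forbidden (or were counted with a different multiplicity) for non-exceptional primes now becoming available, and the remarkable coincidence asserted in the paper — that the exceptional answer has the same shape as the non-exceptional one — will come out of the arithmetic of these corrected counts conspiring to reproduce the same rational function with the parameter $n = p+1$ substituted.

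\textbf{Main obstacle.} The hard part will be step (ii)–(iii): correctly diagnosing precisely how the exceptionality of $p$ for $M_{p+1}$ alters the set of admissible $\SNFR$/$\SNFS$ configurations and their multiplicities, and then showing that the altered count still telescopes into the clean two-factor geometric series above. Unlike the non-exceptional case (where \cite{MaxClassI} already did the bookkeeping), here one must handle the boundary phenomena — representations whose degree is close to the transition where the maximal-class structure "wraps around" modulo $p$ — by hand, and verify no over- or under-counting. Getting the numerator exactly $(1-p^{-s})^2$ rather than, say, $(1-p^{-s})$ times something messier is the sharp test that the low-degree corrections have been accounted for correctly; I would double-check it by independently computing $r_1 = 1$, $r_p$, and $r_{p^2}$ directly from the group presentation and confirming they match the Taylor coefficients of the proposed rational function.
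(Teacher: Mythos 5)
Your plan begins from a parametrization that the paper does not use: the constructive method of \cite{MaxClassI} does not encode irreducibles of $M_{p+1}$ by Smith--normal--form data $\SNFR,\SNFS$. Rather, it puts the representing matrices $x_1,\dots,x_{p+1},y$ into an explicit monomial standard form, parametrizes each representation by the depths $s(\lambda_i)$ of the diagonal eigenvalues $\lambda_2,\dots,\lambda_{p+1}$, tests irreducibility via the minimal stable subspace $\V(\rho)$ (Corollaries~\ref{minimalpower} and~\ref{VpN-1}), and corrects for overcounting by the twist-and-shout lemmas (Lemmas~\ref{shout},~\ref{nonexshout}). Since your steps (ii) and (iii) are phrased entirely in terms of admissible $\SNFR/\SNFS$ configurations, they cannot be carried out as stated.

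More substantively, the proposal never identifies the mechanism by which the exceptional count ends up coinciding with the non-exceptional one, and so it does not constitute a proof. The actual argument isolates the exceptional behaviour to the regime where $s(\lambda_i)\le N-1$ for all $3\le i\le p+1$. There the well-definedness relation $\lambda_2^{p^N}\lambda_{p+1}^{T_p(p^N-1)}=1$ is genuinely altered because $T_p(p^N-1)=\alpha p^{N-1}$ with $p\nmid\alpha$ (rather than vanishing mod $p^N$, as $T_k(p^N-1)$ does for $k<p$), and after writing $\lambda_i=\lambda_*^{\alpha_i p^{m_i}}$ the irreducibility test reduces, via Wilson's theorem, to $\alpha_2 p^{m_2}+\alpha_{p+1}p^{m_{p+1}-1}\not\equiv 0\bmod p$. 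The crux is then a clean cancellation: the irreducibles gained in the exceptional case ($m_{p+1}=1,\,m_2\ge 1$) contribute $(1-p^{-1})p^{(p-1)(N-1)}$ twist isoclasses, while those lost ($m_{p+1}=1,\,m_2=0,\,\alpha_2\equiv-\alpha_{p+1}\bmod p$) remove exactly the same number, so $r_{p^N}$ equals the non-exceptional value and the zeta function follows at once from \cite{MaxClassI}. Your plan treats this coincidence as something to be ``discovered'' by matching coefficients of the proposed rational function, and proposes numerical verification of $r_1$, $r_p$, $r_{p^2}$ as a check, but neither step supplies a proof valid for general $N$.
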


\begin{theorem}
The $2$-local representation zeta function of the group $M_4$ is 

\[ \zeta^{irr}_{M_{4},2}(s) = \frac{(1-2^{-s})^2}{(1-2^{1-s})(1-2^{2-s})}.
\]
\end{theorem}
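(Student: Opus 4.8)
The plan is to apply the constructive method of \cite{MaxClassI} to $M_4$ at the prime $2$: build every irreducible representation of $2$-power degree up to twist, count them, and read off the zeta function. Write $A=\langle a_1,a_2,a_3,a_4\rangle$; then $A\cong\mathbb{Z}^4$ is abelian, $M_4=A\rtimes\langle b\rangle$, and $b$ acts on $A$ by the unipotent shift $a_i\mapsto a_ia_{i+1}$ (with $a_4\mapsto a_4$). By the results recalled in the introduction, every irreducible representation of $M_4$ factors, up to twist, through a finite $2$-quotient and is monomial, so by Clifford theory it is induced from an extension to a stabilizer of a character $\chi$ of $A$ of $2$-power order. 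The first step is to lay out the parametrizing data: such a $\chi$, its $\langle b\rangle$-orbit (whose length equals the degree of the induced representation, and is automatically a power of $2$ since $b$ acts unipotently on the $2$-power torsion of $\widehat{A}$), and the twist relation, which translates $\chi$ by the $b$-invariant characters of $A$ supported on $a_1$, namely the restrictions to $A$ of the linear characters of $M_4$. As in the prequel, the relevant finite quotients of $A$ and the induced $\langle b\rangle$-actions are controlled by explicit Smith normal form computations.

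The structural fact that makes $2$ exceptional for $M_4$, and that drives the calculation, concerns $b^2$. Writing $N$ for the nilpotent part of the $b$-action on $A$ (so $N^3\ne 0=N^4$), we have $b^2=(I+N)^2=I+2N+N^2$, and because $\binom{2}{2}=1$ is a unit modulo $2$ the term $N^2$ survives: concretely $b^{-2}a_1b^2=a_1a_2^2a_3$, the trailing $a_3$ being the $\binom{2}{2}$ term. This is opposite to the non-exceptional behaviour, where $b^p$ acts trivially on $A/pA$, and it changes which $\langle b\rangle$-orbit lengths occur among the $2$-power-order characters of $A$, and with what multiplicities. I would organize the count along the chain $M_2\le M_3\le M_4$ fixed in the introduction: a character $\chi$ with $\chi(a_4)=1$ yields a representation inflated from $M_3\cong M_4/\langle a_4\rangle$ (as $a_4$ is central and lies in $[M_4,M_4]$), so these contribute exactly $\zeta^{irr}_{M_3,2}(s)$, the $p=2$ case of the theorem above for $M_{p+1}$; hence only the representations with $\chi(a_4)\ne 1$ must be enumerated afresh, via the exceptional orbit-count, and it suffices to show they contribute $\frac{(1-2^{-s})^2}{(1-2^{1-s})(1-2^{2-s})}-\frac{(1-2^{-s})^2}{(1-2^{1-s})^2}$.

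I expect this exceptional orbit-count to be the main obstacle. One must determine, for the anomalous $b^2$-action, how many twist classes of each degree $2^N$ occur among the characters with $\chi(a_4)\ne 1$, tracking how the $N^2$-term interacts with the stabilizers and with the twisting translations by $a_1$-supported characters, and then verify that, once the inflated part $\zeta^{irr}_{M_3,2}(s)$ is added back, everything collapses to the stated rational function; it is not a priori obvious that the exceptional terms do not distort the totals. Granting the count, the remaining steps are routine: one checks $r_1(M_4)=1$ and $r_{2^N}(M_4)=9\cdot 2^{2N-3}-2^{N-2}$ for $N\ge 1$, and recognises $\sum_{N\ge 0}r_{2^N}(M_4)\,2^{-Ns}$ as the expansion in powers of $2^{-s}$ of $\frac{(1-2^{-s})^2}{(1-2^{1-s})(1-2^{2-s})}$. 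As consistency checks, the answer is of the same shape as the non-exceptional $2$-local zeta function of $M_4$ from \cite{MaxClassI}; it is compatible with the functional equation of \cite[Theorem D]{Voll1}, the derived subgroup of $M_4$ having Hirsch length $3$ (and indeed the function is multiplied by $p^3$ under $p\mapsto p^{-1}$); and, together with the prequel, it completes the global representation zeta function of $M_4$.
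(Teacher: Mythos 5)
Your framing is structurally equivalent to the paper's constructive method (in standard form the $x_i=\rho(a_i)$ are diagonal, $y=\rho(b)$ is the shift, so $\rho$ is exactly induced from a character $\chi$ of $A$ with $\chi(a_i)=\lambda_i$, the paper's ``shouting'' being conjugation by powers of $y$, i.e.\ the $\langle b\rangle$-action on $\chi$), and your splitting by $\chi(a_4)=1$ versus $\chi(a_4)\neq1$ is a genuinely cleaner organizing principle than the paper's eight depth cases: since $a_4$ is central and $b$-fixed, and twists only translate the $a_1$-coordinate of $\chi$, the $\chi(a_4)=1$ twist classes biject with those of $M_4/\langle a_4\rangle\cong M_3$ and so contribute precisely $\zeta^{irr}_{M_3,2}(s)$, which Theorem~1.1 with $p=2$ supplies. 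Your identification of the source of the exceptionality — that $\binom{2}{2}=1$ makes the $N^2$ term of $b^2=(I+N)^2$ survive mod $2$, so $b^{-2}a_1b^2=a_1a_2^2a_3$ — is also the right diagnosis, and your closed form $r_{2^N}(M_4)=9\cdot2^{2N-3}-2^{N-2}$ does agree with the paper's values $4,17,70,\dots$ and does resum to the stated rational function.

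But the proposal never actually performs the count. You write ``Granting the count, the remaining steps are routine,'' and that concession is where essentially all of the proof lives. Determining, for each degree $2^N$, how many $\langle b\rangle$-orbits of characters with $\chi(a_4)\neq1$ there are up to twist requires, for every choice of depths of $\chi(a_2),\chi(a_3),\chi(a_4)$, deciding (i) the exact orbit length, i.e.\ the minimal $j$ with $\chi\circ\mathrm{Ad}(b^{2^j})=\chi$, which is where the anomalous $N^2$ term enters; and (ii) the size of the intersection of the orbit with a twist translate, which is the paper's Lemma~2.10/Lemma~2.13 computation of $\V(\rho|_{M_3})$. The paper's Tables~\ref{tab:M4cases}--\ref{tab:Case7} record the outcome of exactly this analysis, including the non-obvious facts that Cases~5 and~8 contribute nothing and that the Case~6 total depends on $N$ through a linear term that cancels only after combining subcases. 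None of this is derived in the proposal, nor is the closed form for $r_{2^N}$ verified against it, nor is the auxiliary claim that the $\chi(a_4)\neq1$ part contributes $(1-2^{-s})^2\cdot 2^{1-s}/\big((1-2^{1-s})^2(1-2^{2-s})\big)$ established. So while the plan is sound and arguably better organized than the paper's case split, as written it is an outline with the central computation deferred, not a proof.
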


Combining the previous two theorems with the main result from \cite{MaxClassI} and \cite[Theorem 5]{NM} we have the following corollary.

\begin{corollary}
Let $n \in \{2,3,4\}.$ Then the global representation zeta function of $M_n$ is
\[ \zeta^{irr}_{M_n}(s)= \dfrac{\zeta(s-1)\zeta(s-(n-2))}{(\zeta(s))^2}.
\]
\end{corollary}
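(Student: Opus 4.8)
The plan is to assemble the global zeta function from the $p$-local factors by means of the Euler product $\zeta^{irr}_{M_n}(s) = \prod_p \zeta^{irr}_{M_n,p}(s)$ stated in the introduction, splitting the primes into the exceptional and non-exceptional ranges. For a fixed $n \in \{2,3,4\}$, a prime $p$ is non-exceptional precisely when $p > n-1$ (equivalently $p+1 > n$), so that $M_n$ sits inside the stable range covered by the main theorem of \cite{MaxClassI}; the finitely many remaining primes $p \le n-1$ are exceptional. For $n=2$ there are no exceptional primes, for $n=3$ only $p=2$ is exceptional (and $M_3 = M_{p+1}$ for that $p$), and for $n=4$ only $p=2$ and $p=3$ are exceptional — but $p=3$ is exactly the case $M_4 = M_{p+1}$ covered by Theorem 1.1, and $p=2$ is covered by Theorem 1.2. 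So in every case each $p$-local factor is known in closed form.

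First I would record the non-exceptional local factor from \cite{MaxClassI}: for $p > n-1$ one has
\[
\zeta^{irr}_{M_n,p}(s) = \frac{(1-p^{-s})^2}{(1-p^{(n-2)-s})(1-p^{1-s})},
\]
which is the same rational function in $p^{-s}$ that appears in Theorem 1.1 with $n$ in place of $p+1$. The key observation, and the reason the corollary is clean, is that the exceptional factors supplied by Theorems 1.1 and 1.2 have \emph{exactly the same shape}: Theorem 1.1 gives the factor above at $p$ with exponent $(p+1)-2 = n-2$ when $n = p+1$, and Theorem 1.2 gives it at $p=2$ with $n=4$ (so exponent $n-2 = 2$), matching $2^{2-s}$ in the denominator. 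Hence for every prime $p$, without exception,
\[
\zeta^{irr}_{M_n,p}(s) = \frac{(1-p^{-s})^2}{(1-p^{(n-2)-s})(1-p^{1-s})}.
\]

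Next I would take the Euler product over all $p$. Using $\prod_p (1-p^{-w})^{-1} = \zeta(w)$ for $\Re(w)>1$, each denominator factor $\prod_p (1-p^{1-s})^{-1} = \zeta(s-1)$ and $\prod_p (1-p^{(n-2)-s})^{-1} = \zeta(s-(n-2))$, while the numerator contributes $\prod_p (1-p^{-s})^2 = \zeta(s)^{-2}$. This yields
\[
\zeta^{irr}_{M_n}(s) = \prod_p \frac{(1-p^{-s})^2}{(1-p^{(n-2)-s})(1-p^{1-s})} = \frac{\zeta(s-1)\,\zeta(s-(n-2))}{\zeta(s)^2},
\]
as claimed, with convergence in a suitable right half-plane ($\Re(s) > n-1$) and meromorphic continuation inherited from the Riemann zeta function. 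The citation of \cite[Theorem 5]{NM} is presumably needed only to pin down the base case $n=2$ (the abelian-ish group $M_2$), where the formula reads $\zeta(s-1)/\zeta(s)$ after the obvious cancellation, so I would invoke it there and check consistency with the $n=2$ specialization of the displayed product.

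The only real content is the bookkeeping that \emph{every} prime is covered and that all local factors genuinely coincide as rational functions in $p^{-s}$; there is no analytic obstacle, since the Euler product manipulation is formal once the local factors are in hand. The step most in need of care is therefore the case analysis matching the exceptional primes for $n=3,4$ to Theorems 1.1 and 1.2 and verifying the exponent $n-2$ agrees in each instance — in particular checking that $M_3$ and $M_4$, as defined via the subgroup convention $M_k = \langle a_{n-k+1},\dots,a_n,b\rangle$, are literally the groups to which those theorems apply.
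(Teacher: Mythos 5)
Your proposal is correct and follows essentially the same route as the paper: the corollary is obtained by multiplying together the $p$-local factors via the Euler product $\zeta^{irr}_{M_n}(s) = \prod_p \zeta^{irr}_{M_n,p}(s)$, with the non-exceptional factors supplied by \cite{MaxClassI}, the exceptional ones by Theorems 1.1 and 1.2 (namely $p=2$ for $M_3$, and $p=2,3$ for $M_4$), and the $n=2$ case by \cite[Theorem 5]{NM}; since every local factor turns out to be the same rational function $(1-p^{-s})^2/\bigl((1-p^{(n-2)-s})(1-p^{1-s})\bigr)$, the product collapses to $\zeta(s-1)\zeta(s-(n-2))/\zeta(s)^2$. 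The paper leaves this bookkeeping implicit, whereas you spell it out, including the correct identification of the exceptional primes as those $p \le n-1$.
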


\section{Important Results from Prequel}

In order to keep this paper relatively self-contained, we give a list of results and definitions that appear in \cite{MaxClassI} that are used in this paper. For the proofs of the following results, see the aforementioned paper.

\begin{definition}\label{def:SpN} 
Let $\mu_p^{\infty}$ be the set of all complex $p^\ell$th roots of unity for all $\ell \in \mathbb{N}$ and $\mu_p^k$ be the set of all $p^k$th roots of unity (and note that $\mu_p^k\backslash \mu_p^{k-1}$ is the set of primitive $p^k$th roots of unity). Define $s:\mu^\infty_p \to \mathbb{N}$ by $s(\lambda)=k$ if and only if $\lambda \in \mu_p^k \backslash \mu_p^{k-1}.$
\end{definition}

Let $T_0(0)=1, \  T_0(j)=1,$ and $T_j(0)=0$ for $j \in \mathbb{N}$ and recursively define $T_{k}(j)=\sum_{l=1}^j T_{k-1}(l)=T_{k}(j-1)+T_{k-1}(j)$  for $k \in \mathbb{N}.$ The next lemma lists some properties of these numbers that are needed for this paper. 

\begin{lemma}\cite[Lemma 2.4]{MaxClassI} Let $i,j,k,b \in \mathbb{N}$ and $T_{k}(j)$ be defined as above.
\begin{enumerate}[i.] \label{tlemma}
\item $T_{k}(i)= {{i+k-1}\choose{k}}=\frac{i(i+1)\ldots(i+k-1)}{k!}.$
\item Let $p > k.$ Then for any $b \in \mathbb{N}$ and $\alpha$ such that $1 \leq \alpha \leq p-1$ we have $T_k(\alpha p^b + j) = T_k(j) \mod p^b.$
\item If $p>k$ then $T_k(p^N-1)=0 \mod p^N.$
\end{enumerate}
\end{lemma}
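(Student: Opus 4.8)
The plan is to prove the three parts in order, since each builds on the previous one. For part (i), I would argue by induction on $k$. The base case $k=0$ is immediate from the definition $T_0(j)=1$ and the convention ${{i-1}\choose{0}}=1$. For the inductive step, I would use the defining recursion $T_k(i)=T_k(i-1)+T_{k-1}(i)$ together with a secondary induction on $i$ (with base case $T_k(0)=0={{k-1}\choose{k}}$ for $k\in\mathbb N$), and then invoke Pascal's rule ${{i+k-1}\choose{k}}={{i+k-2}\choose{k}}+{{i+k-2}\choose{k-1}}$ to close the induction. The explicit product formula $\frac{i(i+1)\cdots(i+k-1)}{k!}$ then follows by expanding the binomial coefficient; I would note that $T_k$ is (up to the shift $i\mapsto i-1$) the Hilbert function of a polynomial ring in $k$ variables, which is why these appear naturally in counting eigenspace data.

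For part (ii), having the closed form $T_k(m)=\frac{m(m+1)\cdots(m+k-1)}{k!}$ from part (i) in hand, I would write $T_k(\alpha p^b+j)=\frac{1}{k!}\prod_{t=0}^{k-1}(\alpha p^b+j+t)$ and expand the product. Every term containing at least one factor of $\alpha p^b$ is divisible by $p^b$, provided that factor survives division by $k!$; here is exactly where the hypothesis $p>k$ enters, since then $p\nmid k!$, so $k!$ is a unit mod $p^b$ and dividing by it does not destroy divisibility by $p^b$. Hence $T_k(\alpha p^b+j)\equiv \frac{1}{k!}\prod_{t=0}^{k-1}(j+t)=T_k(j)\pmod{p^b}$. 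The only subtlety to be careful about is that one is working modulo $p^b$ in the ring $\mathbb{Z}_{(p)}$ (or equivalently clearing denominators first), so I would phrase the argument as: $k!\,T_k(\alpha p^b + j)\equiv k!\,T_k(j)\pmod{p^b}$ as integers, then cancel the unit $k!$.

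Part (iii) I would deduce from part (ii). Taking $b=N$ and choosing $\alpha$ appropriately, one wants to compare $T_k(p^N-1)$ with a value of $T_k$ at a small argument. Concretely, writing $p^N-1 = (p-1)p^{N-1} + (p^{N-1}-1)$ and iterating part (ii) (or applying it once with $b$ chosen so that $\alpha=p-1$ satisfies $1\le\alpha\le p-1$), one reduces modulo successive powers of $p$; alternatively, and more cleanly, I would use the product formula directly: $k!\,T_k(p^N-1)=\prod_{t=0}^{k-1}(p^N-1+t)=\prod_{u=1}^{k}(p^N - u)$, which is $\equiv \prod_{u=1}^k(-u) = (-1)^k k! \pmod{p^N}$, so $T_k(p^N-1)\equiv(-1)^k\pmod{p^N}$ — wait, that gives $(-1)^k$, not $0$, so the reduction must instead go through the recursion $T_k(p^N-1)=\sum_{l=1}^{p^N-1}T_{k-1}(l)$ and part (ii). The cleanest route: by part (ii) the sequence $\big(T_{k-1}(l)\big)_{l\ge 1}$ is periodic mod $p^N$ with period $p^N$ once $p>k-1$...

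Let me restate part (iii) more carefully. Since $p>k$, iterate: $T_k(p^N-1)=T_k(p^N-2)+T_{k-1}(p^N-1)$, and by downward induction together with part (ii) applied at each level, the sum $\sum_{l=1}^{p^N-1}T_{k-1}(l)\pmod{p^N}$ telescopes using the periodicity from (ii). I expect \textbf{this last reduction to be the main obstacle}: getting the indexing of the nested sums and the choice of $\alpha$ at each stage exactly right so that the residual terms cancel modulo $p^N$. The heuristic is that $T_k(p^N-1)=T_{k+1}(p^N)-T_{k+1}(p^N-1)$ and $T_{k+1}(p^N)\equiv T_{k+1}(0)=0$ by (ii) (taking $b=N$, $\alpha=1$, $j=0$, valid since $p>k+1$ would be needed — so one works with $p\ge k+1$, i.e. $p>k$, and checks the boundary case separately), while the subtracted term is handled inductively; this is the identity I would actually write down, modulo verifying the $p>k$ versus $p>k+1$ boundary bookkeeping.
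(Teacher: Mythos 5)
Your treatment of parts (i) and (ii) is correct and complete: part (i) is the standard double induction closed by Pascal's rule, and your part (ii) argument --- clear the denominator, expand the product, observe that every cross term carries the factor $\alpha p^b$, and cancel the unit $k!$ --- is exactly right. (Incidentally that argument nowhere uses the hypothesis $1\leq\alpha\leq p-1$; the congruence holds for any integer $\alpha$.)

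For part (iii), however, you abandoned what is in fact the correct and shortest route because of a reindexing slip. You wrote $\prod_{t=0}^{k-1}(p^N-1+t)=\prod_{u=1}^{k}(p^N-u)$, but the substitution $u=1-t$ sends $t\in\{0,\dots,k-1\}$ to $u\in\{2-k,\dots,1\}$, so the correct rewriting is
\[
\prod_{t=0}^{k-1}(p^N-1+t)=(p^N-1)\cdot p^N\cdot(p^N+1)\cdots(p^N+k-2).
\]
For $k\geq 2$ this product visibly contains the factor $p^N$ (the $t=1$ term), so $k!\,T_k(p^N-1)\equiv 0\pmod{p^N}$, and since $p>k$ gives $p\nmid k!$ we may cancel to obtain $T_k(p^N-1)\equiv 0\pmod{p^N}$. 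The ``$(-1)^k$'' you obtained was an artefact of the erroneous reindexing, and the recursion detour you then attempted is unnecessary; it also contains an off-by-one error, since $T_{k+1}(p^N)-T_{k+1}(p^N-1)=T_k(p^N)$, not $T_k(p^N-1)$.

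One more point worth flagging, which the corrected product makes transparent: part (iii) as stated is false for $k=1$, since $T_1(p^N-1)=p^N-1\not\equiv 0\pmod{p^N}$ (and trivially false for $k=0$). The statement is correct precisely for $2\leq k<p$, which is the range in which the product above actually contains $p^N$. A proof should make that restriction explicit. (The paper itself states the lemma without proof, so there is no author's argument to compare against; but your product-formula approach, once the indexing is fixed, is the natural one and gives the clean two-line proof.)
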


As a corollary of (ii) we have the following.

\begin{corollary}\cite[Corollary 2.5]{MaxClassI}\label{termequality}
Let $p$ be a prime, let $k <p$, let $N \geq 1,$ let $1 \leq m \leq N,$ and let $\alpha \in \mathbb{N}$ such that $p \nmid \alpha.$ 
    
Then, for all $\beta$ such that $1 \leq \beta < p^m$ and all $j$ such that $0 \leq j \leq p^{N-m}-1,$ we have that $\alpha p^m T_k(\beta p^{N-m}+j) =  \alpha p^m T_k(j) \mod p^N.$ 

\end{corollary}

\begin{lemma}\cite[Lemma 3.1]{MaxClassI}\label{formx}
For $ 1 \leq i \leq n-1$  we have that $\lambda_{i,j} =\prod_{k=i}^{n} \lambda_{k}^{T_{k-i}(j-1)}$ and thus the matrix $x_i$ has the structure
\begin{equation}x_{i} =\left( \begin{array} {cccc} \lambda_{i} &  &  &   \\  &  \prod_{k=i}^{n} \lambda_{k}^{T_{k-i}(1)} & &  \\  &
 & \ddots &  \\  &  &
 &  \prod_{k=i}^{n} \lambda_{k}^{T_{k-i}(p^N-1)} \end{array} \right).\end{equation}
Moreover we have that
\begin{equation}\label{cor1}
\lambda_i^{p^N} \prod_{k=i+1}^n \lambda_k^{T_{k-i}(p^N-1)} = 1.
\end{equation}
\end{lemma}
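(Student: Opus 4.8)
The plan is to work with the normal form for $\rho$ already fixed in the construction: $x_i=\rho(a_i)$ is diagonal and $\rho(b)$ is a monomial matrix whose underlying permutation is a single $p^N$-cycle, so that conjugation by $\rho(b)$ cyclically permutes the diagonal entries $\lambda_{i,1},\dots,\lambda_{i,p^N}$ of $x_i$. The defining relation $[a_i,b]=a_{i+1}$, rewritten as $\rho(b)^{-1}x_i\rho(b)=x_ix_{i+1}$ and read off one diagonal position at a time, becomes a recurrence of the shape $\lambda_{i,j+1}=\lambda_{i,j}\,\lambda_{i+1,j+1}$. The key observation is that this is formally identical to the defining recursion $T_k(j)=T_k(j-1)+T_{k-1}(j)$ from Lemma \ref{tlemma}, which is precisely what forces the numbers $T_k$ into the exponents.

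For the first assertion I would prove $\lambda_{i,j}=\prod_{k=i}^n\lambda_k^{T_{k-i}(j-1)}$ (with $\lambda_k:=\lambda_{k,1}$) by downward induction on $i$ together with induction on $j$. The base case $i=n$ is the remark that $a_n$ is central in $M_n$, so $x_n$ commutes with $\rho(b)$ and is therefore scalar, $x_n=\lambda_nI$; this agrees with the formula since $T_0\equiv 1$ and $T_m(0)=0$ for $m\geq 1$. For the inductive step, assuming the formula for $x_{i+1}$ in all positions and for $x_i$ in position $j$, one substitutes into $\lambda_{i,j+1}=\lambda_{i,j}\lambda_{i+1,j+1}$, collects exponents, and applies $T_{k-i}(j-1)+T_{k-i-1}(j)=T_{k-i}(j)$ (again Lemma \ref{tlemma}) to recover the formula in position $j+1$. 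Reading off the diagonal of $x_i$ then yields the displayed matrix verbatim.

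For the ``moreover'' statement I would use the one constraint not yet exploited, namely that $\rho$ has degree $p^N$ while the permutation part of $\rho(b)$ is a $p^N$-cycle: this forces $\rho(b)^{p^N}$ to be scalar, equivalently it forces the entry-recurrence to be consistent when carried once around the cycle, from position $p^N$ back to position $1$. Feeding the closed form of part one into this boundary instance of the recurrence and simplifying --- here one uses the elementary telescoping identity $\sum_{l=1}^{m}T_k(l)=T_{k+1}(m)$, which is immediate from the definition of the $T_k$, together with $T_0\equiv 1$ --- collapses the relation to $\lambda_i^{p^N}\prod_{k=i+1}^n\lambda_k^{T_{k-i}(p^N-1)}=1$, the exponent $p^N$ on $\lambda_i$ being exactly the cycle length produced by telescoping the constant sequence $T_0$.

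I expect the main obstacle to be the index bookkeeping in the ``moreover'' part: the wrap-around relation must be set up in precisely the right indexing convention for $\rho(b)$ so that the telescoped sums of the $T_k$ land on $T_{k-i}(p^N-1)$ rather than a shifted variant, and so that the power of $\lambda_i$ comes out as exactly $p^N$. It is worth keeping in mind that all $\lambda_k$ are roots of unity of $p$-power order, so these exponent identities only need to hold modulo the relevant power of $p$, where the congruences of Lemma \ref{tlemma}(ii)--(iii) and Corollary \ref{termequality} are at one's disposal. The closed-form part, by contrast, is a routine induction once the two recursions have been matched.
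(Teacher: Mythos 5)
Your approach to the closed form is correct and is the natural one: the relation $[a_i,b]=a_{i+1}$ diagonalizes to the recurrence $\lambda_{i,j+1}=\lambda_{i,j}\lambda_{i+1,j+1}$, whose exponent bookkeeping is exactly $T_{k-i}(j)=T_{k-i}(j-1)+T_{k-i-1}(j)$, and the downward induction on $i$ anchored at the central generator $a_n$ (forcing $x_n=\lambda_n I$) recovers the displayed diagonal.

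The ``moreover'' is where the index bookkeeping you rightly flag as the main obstacle actually bites, and it does not land where you want. Carrying the recurrence once around the cycle (the consistency condition $\lambda_{i-1,p^N+1}=\lambda_{i-1,1}$) forces $\prod_{l=1}^{p^N}\lambda_{i,l}=1$; substituting the closed form and telescoping via $\sum_{l=1}^{m}T_k(l)=T_{k+1}(m)$ together with $T_0\equiv 1$ yields
\[
\lambda_i^{p^N}\prod_{k=i+1}^{n}\lambda_k^{T_{k-i+1}(p^N-1)}=1,
\]
with subscript $k-i+1$, not $k-i$: summing $T_m(0),\dots,T_m(p^N-1)$ unavoidably raises the subscript by one, and you cannot absorb the shift into the convention for $y$ without simultaneously breaking the match between the $\lambda$-recurrence and the $T$-recurrence that part one relies on. This is not a defect in your method but evidence that the stated exponent is a typo. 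The paper's own uses of Equation (\ref{cor1}) all carry the $+1$: in Section 3 the $i=2$ instance is displayed with exponent $T_{k-1}$, hence $T_p$ on $\lambda_{p+1}$, precisely the term whose valuation $v_p\big(T_p(p^N-1)\big)=N-1$ drives the count (with $T_{p-1}$ that term would have valuation $N$ and vanish), and equations (\ref{cc}) and (\ref{ccc}) in Section 4 read off $\lambda_3^{2^N}\lambda_4^{T_2(2^N-1)}=1$ and $\lambda_2^{2^N}\lambda_3^{T_2(2^N-1)}\lambda_4^{T_3(2^N-1)}=1$. Your derivation produces the version the paper actually uses; report the corrected exponent $T_{k-i+1}(p^N-1)$ rather than try to tune the set-up to hit $T_{k-i}$.
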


\begin{definition}
The matrices $x_1, \ldots, x_n, y$ are in \emph{standard form} if the $x_i$ are in the form of Lemma \ref{formx} and $y$ is in the form
\begin{equation}\label{formy}
y =\left( \begin{array} {cccc} 0 &  &  & 1  \\ 1 & \ddots & &  \\  & \ddots
 & \ddots &  \\  &  & 1
 & 0 \end{array} \right).
\end{equation} We say $\rho$ is in \emph{standard form} if, under a chosen basis, the matrices $x_1, \ldots, x_n, y$ are in standard form. We remind the reader that in standard form we set $\lambda_1=1$ by twisting.
\end{definition}

We set up notation for entries of our matrices. Let $\lambda_{i,j}$ be the $j$th diagonal entry of $x_i$ and let $\lambda_i=\lambda_{i,1}$. Let $V_{p^k}$ be the subspace spanned by $\langle y \rangle \cdot (e_1+e_{p^{k}+1}+ \ldots +e_{(p^{N-k}-1)p^{k}+1})$. Also, we define the $n$-tuples $\Lambda_n(k):=(\lambda_{1,k}, \ldots, \lambda_{n,k})$ and $\Lambda^\prime_n(k):=(\lambda_{2,k}, \ldots, \lambda_{n,k})$ where $k$ is considered mod $p^N.$ 

Note that we view $\Lambda^\prime$ as $\Lambda$ when are representation is restricted to $M_{n-1}$ and thus the following lemma also holds for $\Lambda^\prime(k)$.

\begin{lemma}\cite[Lemma 4.1]{MaxClassI}\label{tupleequality}
For any $k_1,k_2$ if $\Lambda_n(k_1)=\Lambda_n(k_2)$ then $\Lambda_n(k_1+1)=\Lambda_n(k_2+1).$
\end{lemma}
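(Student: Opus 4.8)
The claim is that the $n$-tuples $\Lambda_n(k) = (\lambda_{1,k},\dots,\lambda_{n,k})$ satisfy a shift-invariance: equality at indices $k_1,k_2$ forces equality at $k_1+1,k_2+1$. The natural strategy is to produce an explicit "transition rule" expressing $\Lambda_n(k+1)$ as a function of $\Lambda_n(k)$ alone, and then the lemma follows immediately since a function applied to equal inputs gives equal outputs. So the whole task reduces to: \emph{find the recursion that takes $\Lambda_n(k)$ to $\Lambda_n(k+1)$.}

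First I would write down, from Lemma~\ref{formx}, the closed form $\lambda_{i,j} = \prod_{k=i}^{n}\lambda_k^{T_{k-i}(j-1)}$, so that the $(j+1)$-st diagonal entry of $x_i$ is $\lambda_{i,j+1} = \prod_{k=i}^n \lambda_k^{T_{k-i}(j)}$. The key combinatorial input is the Pascal-type recursion $T_{k}(j) = T_{k}(j-1) + T_{k-1}(j)$ from the definition of the $T_k$. Applying this inside the product and re-indexing, I expect to get a clean multiplicative relation of the shape $\lambda_{i,j+1} = \lambda_{i,j}\cdot \lambda_{i+1,j+1}$ (with the boundary convention $\lambda_{n+1,\bullet} = 1$, since $T_0(j)=1$ and the chain of $T$'s terminates). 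Unwinding this telescoping relation from $i=n$ upward, each entry $\lambda_{i,k+1}$ is determined by the entries $\lambda_{i,k}, \lambda_{i+1,k},\dots,\lambda_{n,k}$ — i.e.\ by $\Lambda_n(k)$ — which is exactly what is needed.

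Concretely, the steps in order: (1) record the explicit diagonal entries of $x_i$ and the recursion $T_k(j) = T_k(j-1) + T_{k-1}(j)$; (2) derive the single-step relation $\lambda_{i,j+1} = \lambda_{i,j}\,\lambda_{i+1,j+1}$ for $1 \le i \le n$, interpreting $\lambda_{n+1,j+1}$ as $1$; (3) observe that this lets one compute the tuple $\Lambda_n(k+1)$ from $\Lambda_n(k)$ by a fixed rule (compute $\lambda_{n,k+1}$ from $\lambda_{n,k}$, then $\lambda_{n-1,k+1}$ from $\lambda_{n-1,k}$ and $\lambda_{n,k+1}$, and so on up the chain); (4) conclude that if $\Lambda_n(k_1) = \Lambda_n(k_2)$ then applying this rule to both sides yields $\Lambda_n(k_1+1) = \Lambda_n(k_2+1)$. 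One should also note the index $k$ is read mod $p^N$, so the relations at the wrap-around index $k = p^N-1$ are consistent; this is guaranteed by equation~\eqref{cor1} of Lemma~\ref{formx}, which is precisely the relation that makes the diagonal of $x_i$ close up after $p^N$ steps.

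The main obstacle is purely bookkeeping: getting the exponent arithmetic with the $T_{k-i}(j)$ right, including the boundary term at $k = n$ (where the product degenerates) and the $j=0$ base case (where $T_k(0) = 0$ for $k\ge 1$ but $T_0(0) = 1$). There is no deep difficulty — once the one-step multiplicative recursion $\lambda_{i,j+1} = \lambda_{i,j}\lambda_{i+1,j+1}$ is established, the lemma is an immediate consequence of "a function of equal arguments is equal." If that recursion turned out to involve any entry outside the tuple $\Lambda_n(k)$, the argument would break, so the real content is verifying that the Pascal recursion keeps everything within the current column.
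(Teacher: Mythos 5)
Your proof is correct and takes the expected route. The one-step relation $\lambda_{i,j+1} = \lambda_{i,j}\,\lambda_{i+1,j+1}$ does follow from the Pascal-type recursion $T_{r}(j) = T_{r}(j-1) + T_{r-1}(j)$ applied to the exponents in Lemma~\ref{formx} (equivalently, telescoping it gives $\lambda_{i,j+1} = \prod_{k=i}^n \lambda_{k,j}$), which exhibits $\Lambda_n(k+1)$ as a fixed function of $\Lambda_n(k)$ alone and yields the lemma immediately.
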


\begin{corollary}\cite[Corollary 4.2]{MaxClassI}\label{minimalV}
Let $j$ be the minimal power such that $\Lambda_n(k)=\Lambda_n(\beta p^j +k)$ for all $\beta$ such that $0 \leq \beta \leq p^{N-j}-1$ and for all (and equivalently by Lemma \ref{tupleequality}, for one) $k.$ Then $V_{p^j}$ is a stable subspace of $\rho$ and $V_{p^{j-1}}$ is not stable.
\end{corollary}

We define notation to this effect. Let $H \leq M_n$ and let $\mathcal{V}(\rho|_H)$ be the the minimal stable subspace $V_{p^j},$ as in Corollary \ref{minimalV}, of $\rho|_H.$ We say that $\mathcal{V}(\rho)=\mathcal{V}\big(\rho(M_n)\big).$

\begin{corollary}\cite[Corollary 4.3]{MaxClassI}\label{minimalpower}
 The following are equivalent:
 \begin{enumerate}
\item The number $j$ is minimal such that $\Lambda_n(1)=\Lambda_n(p^j+1)$
\item $\V (\rho)=V_{p^j}.$
\end{enumerate} 
  
\end{corollary}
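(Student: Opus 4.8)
The plan is to deduce the equivalence from Corollary \ref{minimalV} and Lemma \ref{tupleequality} with only light additional bookkeeping. The crucial point is that the minimal $j$ in statement (1) --- the least $j$ with $\Lambda_n(1)=\Lambda_n(p^j+1)$ --- is the very same minimal power $j$ that appears in Corollary \ref{minimalV}, namely the least $j$ for which $\Lambda_n(k)=\Lambda_n(\beta p^j+k)$ holds for all $k$ and all $\beta$ with $0\le\beta\le p^{N-j}-1$. Once that is established, Corollary \ref{minimalV} tells us that for this $j$ the subspace $V_{p^j}$ is $\rho$-stable while $V_{p^{j-1}}$ is not, and unwinding the definition of $\V(\rho)$ as the minimal stable subspace of the form $V_{p^\ell}$ yields (2); reversing the reasoning yields (1) from (2).

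First I would prove that the two minimality conditions describe the same integer. One implication is immediate: if $\Lambda_n(k)=\Lambda_n(\beta p^j+k)$ for all $k$ and all admissible $\beta$, then putting $k=1$, $\beta=1$ gives $\Lambda_n(1)=\Lambda_n(p^j+1)$. Conversely, assume $\Lambda_n(1)=\Lambda_n(p^j+1)$. Applying Lemma \ref{tupleequality} repeatedly, shifting the common index up by one at each step, gives $\Lambda_n(t)=\Lambda_n(p^j+t)$ for every $t\ge 1$; since the argument of $\Lambda_n$ is read modulo $p^N$ and $p^j\mid p^N$ (note $j\le N$ automatically, as $\Lambda_n(1)=\Lambda_n(p^N+1)$ trivially), this is exactly $\Lambda_n(k)=\Lambda_n(p^j+k)$ for all $k$. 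Substituting $k\mapsto p^j+k$, then $k\mapsto 2p^j+k$, and so on, and using transitivity of equality, upgrades this to $\Lambda_n(k)=\Lambda_n(\beta p^j+k)$ for every $\beta$. Hence the set of $j$ for which the hypothesis of (1) holds coincides with the set of $j$ for which the hypothesis of Corollary \ref{minimalV} holds, so their least elements agree.

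Next I would note that, by construction, each $V_{p^\ell}$ is the span of a $\langle y\rangle$-orbit and hence $\langle y\rangle$-invariant; therefore $V_{p^\ell}$ is $\rho$-stable if and only if it is invariant under every diagonal matrix $x_i$ of Lemma \ref{formx}, which happens if and only if $\Lambda_n$ has period dividing $p^\ell$. In particular, along the chain $V_{p^0}\subseteq V_{p^1}\subseteq\cdots\subseteq V_{p^N}$ the property of being $\rho$-stable is upward closed in the exponent. Consequently, with $j$ the common minimal power identified above, Corollary \ref{minimalV} gives that $V_{p^j}$ is stable and no $V_{p^\ell}$ with $\ell<j$ is (each such $V_{p^\ell}\subseteq V_{p^{j-1}}$, and $V_{p^{j-1}}$ is unstable); thus $V_{p^j}$ is the minimal stable subspace of this form, i.e.\ $\V(\rho)=V_{p^j}$, proving $(1)\Rightarrow(2)$. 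For $(2)\Rightarrow(1)$, if $\V(\rho)=V_{p^j}$ then, by the definition of $\V(\rho)$ together with Corollary \ref{minimalV}, $j$ must be the minimal power with the $p^j$-periodicity property, and by the equivalence of the previous paragraph this is the minimal $j$ with $\Lambda_n(1)=\Lambda_n(p^j+1)$.

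The main obstacle, such as it is, is purely one of bookkeeping: one has to track the modular arithmetic carefully while bootstrapping the single equality $\Lambda_n(1)=\Lambda_n(p^j+1)$ into full $p^j$-periodicity via Lemma \ref{tupleequality}, and one has to make sure that ``the minimal stable $V_{p^\ell}$'' is meaningful --- but the $V_{p^\ell}$ form a totally ordered chain and, as observed, stability is upward closed along it, so this is automatic. No computation is needed beyond what is already quoted from the prequel.
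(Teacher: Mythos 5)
Your proof is correct and takes the natural route: you reduce the ``$\Lambda_n(1)=\Lambda_n(p^j+1)$'' condition to the full $p^j$-periodicity condition of Corollary~\ref{minimalV} by bootstrapping with Lemma~\ref{tupleequality} and transitivity, then invoke the definition of $\V(\rho)$ (which is given directly in terms of Corollary~\ref{minimalV}). The paper itself does not reprove this corollary but defers to the prequel; your argument is exactly the derivation those quoted results are set up to support, so I see no gap or deviation worth flagging.
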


\begin{corollary}\cite[Corollary 4.4]{MaxClassI}\label{inclusionV}
Let $\rho:M_n \to GL_{p^N}(\mathbb{C})$ be a representation. If $\V(\rho|_{M_k})=V_{p^{j}}$ for  all $k < n$ then there is some $\ell \geq j$ such that $\V(\rho)=V_{p^\ell}.$
\end{corollary}

We know that if $V_{p^k}$ is $\rho$-stable then so is $V_{p^j}$ for $j \geq k.$ Thus, we obtain the following corollary:
\begin{corollary}\cite[Corollary 4.5]{MaxClassI}\label{VpN-1}
Let $\rho$ be a representation of $M_n.$ The representation $\rho$ is irreducible if and only if $V_{p^{N-1}}$ is not $\rho$-stable.
\end{corollary}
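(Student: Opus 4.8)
The plan is to prove the two implications separately, after putting $\rho$ in standard form: irreducibility is a basis-independent property, so we may assume $\rho\colon M_n \to GL_{p^N}(\mathbb{C})$ is in standard form (for $N=0$ the representation is $1$-dimensional and irreducible, so assume $N \geq 1$), with $x_1,\dots,x_n$ diagonal as in Lemma \ref{formx} and $y$ the $p^N$-cycle of \eqref{formy}.

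For the forward implication I would simply observe that $V_{p^{N-1}}$ is a proper nonzero subspace of $\mathbb{C}^{p^N}$. It is nonzero as it contains $e_1 + e_{p^{N-1}+1} + \cdots + e_{(p-1)p^{N-1}+1}$, and this generating vector is fixed by $y^{p^{N-1}}$ (the shift by $p^{N-1}$), so the whole subspace $V_{p^{N-1}}$ lies in $\ker(y^{p^{N-1}} - I)$, which has dimension $p^{N-1} < p^N$. Hence if $\rho$ is irreducible then $V_{p^{N-1}}$ cannot be $\rho$-stable.

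For the converse, suppose $V_{p^{N-1}}$ is not $\rho$-stable. By the remark preceding the statement, $\rho$-stability of $V_{p^k}$ forces $\rho$-stability of $V_{p^m}$ for all $m \geq k$; hence none of $V_1, V_p, \dots, V_{p^{N-1}}$ is $\rho$-stable, while $V_{p^N} = \mathbb{C}^{p^N}$ trivially is. By Corollaries \ref{minimalV} and \ref{minimalpower}, the minimal power $j$ with $\Lambda_n(1) = \Lambda_n(p^j+1)$ satisfies $\V(\rho) = V_{p^j}$, and since $V_{p^j}$ is $\rho$-stable this forces $j = N$. Now I would use Lemma \ref{tupleequality}: if $\Lambda_n(k_1) = \Lambda_n(k_2)$ for some $k_1 \not\equiv k_2 \pmod{p^N}$, then iterating the lemma around $\mathbb{Z}/p^N\mathbb{Z}$ gives $\Lambda_n(k) = \Lambda_n(k+d)$ for all $k$, with $d = k_2 - k_1$; thus $\Lambda_n$ is invariant under translation by $\gcd(d, p^N) = p^m$ for some $m \leq N-1$, so $\Lambda_n(1) = \Lambda_n(1 + p^m)$, contradicting the minimality of $j = N$. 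Therefore $\Lambda_n$ is injective on $\mathbb{Z}/p^N\mathbb{Z}$, i.e.\ the commuting diagonal matrices $x_1,\dots,x_n$ have $p^N$ pairwise distinct joint eigenvalue tuples, so their common eigenspaces are exactly the coordinate lines $\mathbb{C}e_k$.

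The remainder is routine linear algebra: any $\rho$-stable subspace $W$ is invariant under $x_1,\dots,x_n$, hence decomposes as the direct sum of its intersections with their joint eigenspaces, and since each such eigenspace is a coordinate line, $W$ is a coordinate subspace; being also invariant under $y$, which permutes the standard basis as a single $p^N$-cycle, $W$ must be $0$ or $\mathbb{C}^{p^N}$. Hence $\rho$ is irreducible. The only delicate step — the one I expect to need the most care — is the chain of deductions that pins down $j = N$ from the hypothesis and then extracts injectivity of $\Lambda_n$ via Lemma \ref{tupleequality}; once that is in place, the irreducibility conclusion is immediate.
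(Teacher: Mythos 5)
Your proof is correct and follows the same route the paper has in mind: the paper dispatches the corollary in a one-line remark (using the chain $V_{p^k}$ stable $\Rightarrow V_{p^j}$ stable for $j\geq k$ together with Corollaries \ref{minimalV} and \ref{minimalpower}), leaving the underlying linear-algebra details to the prequel \cite{MaxClassI}. Your write-up correctly fills in exactly those details --- in particular the passage from $j=N$ to injectivity of $\Lambda_n$ via Lemma \ref{tupleequality}, and from there to irreducibility via the joint-eigenspace decomposition of the diagonal $x_i$'s and the cyclic action of $y$.
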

Throughout this paper we use Corollary \ref{VpN-1} to check if a representation $\rho$ is irreducible. We use Corollary \ref{minimalpower} to determine the number of isomorphic representations in standard form in one twist isoclass. 

\begin{definition}
Let $\rho$ be irreducible and let $x_i,y,$  for $i$ such that $1 \leq i \leq n,$ be in standard form as defined earlier in the section. A \emph{standardization} is a matrix $P$ such that, up to twisting, $PyP^{-1}$ and $Px_iP^{-1}$ for $i=1 \ldots n$ are in standard form. The representations $\rho$ and $P\rho P^{-1}$ (note that $Px_1P^{-1}$ may not be in standard form) are said to be equivalent under \emph{standardization}.
\end{definition}

\begin{lemma}\cite[Lemma 5.4]{MaxClassI}\label{shout}
Let $S_\rho$ be the twist isoclass represented by $\rho$ and let $\V(\rho|_{M_{n-1}})=V_{p^{m}}.$ Then there are $p^m$ representations in standard form in $S_\rho$ that are standardization equivalent to $\rho.$
\end{lemma}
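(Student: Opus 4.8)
Let me understand what's being claimed. We have a representation $\rho$ of $M_n$ in standard form, $S_\rho$ its twist isoclass, and $\mathcal{V}(\rho|_{M_{n-1}}) = V_{p^m}$ — meaning $V_{p^m}$ is the minimal $\rho|_{M_{n-1}}$-stable subspace among the $V_{p^k}$. We want to count the representations in standard form within $S_\rho$ that are obtained from $\rho$ by "twist-and-shout" operations: twisting by a degree-1 character and then conjugating to return to standard form (the "shout" presumably being a cyclic shift / reindexing that restores the standard shape of $y$). The claim is that there are exactly $p^m$ such.

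First I would make precise the twisting action on standard-form matrices. Twisting $\rho$ by a linear character $\chi$ of $M_n$: since $M_n / M_n' $ is generated by the images of $b$ and $a_1$, a linear character is determined by $\chi(b)$ and $\chi(a_1)$. Twisting multiplies $y = \rho(b)$ by the scalar $\chi(b)$ and each $x_i = \rho(a_i)$ by $\chi(a_i)$; but $\chi(a_i) = 1$ for $i \geq 2$ since those $a_i$ lie in the derived subgroup, so only $x_1 \mapsto \chi(a_1) x_1$ and $y \mapsto \chi(b) y$ change. Multiplying $x_1$ by a scalar changes $\lambda_1 = \lambda_{1,1}$, hence by Lemma \ref{formx} shifts all the $\lambda_{1,j}$ by that same scalar, so this stays in standard form. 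Multiplying $y$ by $\chi(b)$ destroys the standard form of $y$ (the cyclic-shift matrix becomes a scalar times it), and the "shout" is the conjugation by a diagonal matrix that restores $y$ to the form \eqref{formy}; I would work out that this conjugation permutes/rescales the diagonal entries of the $x_i$ by a cyclic reindexing $j \mapsto j + (\text{something depending on }s(\chi(b)))$ of the diagonal, composed with scaling. This is the computation I'd need to set up carefully.

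The core of the argument is then a stabilizer count. The group of linear characters of $M_n$ modulo those that fix $\rho$ (up to the shout-conjugation) acts on $S_\rho$, and the standard-form representatives we want are the orbit of $\rho$. So I need to determine which pairs $(\chi(a_1), \chi(b))$ send $\rho$ back to (an isomorphic copy of) $\rho$ in standard form. Scaling $x_1$ by $\chi(a_1)$ and shout-reindexing by $\chi(b)$ give back $\rho$ precisely when the resulting diagonal tuples $\Lambda_n(k)$ all match up — i.e., when $\Lambda_n(k + t) = (\text{scalar adjustment})\cdot\Lambda_n(k)$ for the shift $t$ induced by $\chi(b)$. Here is where $\mathcal{V}(\rho|_{M_{n-1}}) = V_{p^m}$ enters: restricting to $M_{n-1} = \langle a_2, \dots, a_n, b\rangle$ controls the tuples $\Lambda_{n-1}(k) = (\lambda_{2,k}, \dots, \lambda_{n,k})$, and by Corollary \ref{minimalpower} (applied to $M_{n-1}$) minimality of $m$ says $\Lambda_{n-1}(1) = \Lambda_{n-1}(p^m + 1)$ but not for any smaller power. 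Combined with Lemma \ref{tupleequality}, this means the reindexing shifts that preserve the $(n-1)$-part of the data are exactly multiples of $p^m$, of which there are $p^{N-m}$ values of $t$ in range — wait, I'd need to reconcile this with the target count $p^m$, so the correct bookkeeping is that the twist by $\chi(b)$ ranges over $p^N$-th roots of unity, the stabilizer has index $p^m$, and adjusting $\lambda_1$ via $\chi(a_1)$ exactly compensates the discrepancy in the first coordinate, leaving precisely $p^m$ genuinely distinct standard-form outputs.

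The main obstacle I anticipate is the precise interaction between the $\chi(a_1)$-scaling and the $\chi(b)$-shift: I must show they don't "interfere" in a way that changes the count — that is, for each admissible shift there's a unique choice of first-coordinate scaling putting us back in standard form with the same data, so the count is governed purely by the shift orbit. This requires carefully tracking how relation \eqref{cor1} constrains $\lambda_1$ in terms of the $\lambda_k$ for $k \geq 2$, ensuring the compensating scalar exists and is unique. Once that bookkeeping is pinned down, the count $p^m$ should fall out of Corollary \ref{minimalpower} and Lemma \ref{tupleequality} as indicated; the rest is routine verification that each of these $p^m$ matrices is genuinely in standard form and genuinely twist-equivalent to $\rho$.
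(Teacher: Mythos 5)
Your high-level strategy (orbit--stabilizer count governed by the minimal period of $\Lambda_{n-1}$, with $\lambda_1$ absorbed as a pure twist degree of freedom, using Lemma~\ref{tupleequality} and Corollary~\ref{minimalpower}) is the right one, and the arithmetic $p^N/p^{N-m}=p^m$ lands on the correct count. But the mechanism you describe for where the cyclic shift comes from is wrong, and if you carried out the computation you flagged as ``I would work out that this conjugation permutes/rescales the diagonal entries,'' it would fail.

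Concretely: to undo the scaling $y\mapsto\chi(b)\,y$ with $\chi(b)=\mu$ a $p^N$-th root of unity, you conjugate by the diagonal matrix $D_{\mu^{-1}}=\operatorname{diag}(1,\mu^{-1},\mu^{-2},\ldots)$. But conjugating a diagonal matrix by another diagonal matrix does nothing: $D_{\mu^{-1}}\,x_i\,D_{\mu^{-1}}^{-1}=x_i$. So this conjugation does \emph{not} induce a reindexing $j\mapsto j+t$ of the $\lambda_{i,j}$, and the shift amount is not ``something depending on $s(\chi(b))$.'' In fact the twist, once compensated by $D_{\mu^{-1}}$, leaves every $\lambda_{i,j}$ with $i\geq 2$ untouched and only multiplies $\lambda_{1,j}$ by $\chi(a_1)$. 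The cyclic shift of the diagonal data comes entirely from the permutation part of the shout: the general matrix $P$ with $PyP^{-1}$ a scalar multiple of $y$ and $Px_iP^{-1}$ diagonal has the form $P=D\cdot y^k$ (diagonal times a power of $y$), and it is the $y^k$ factor --- a \emph{free} parameter $k\in\{0,\ldots,p^N-1\}$, independent of $\chi$ --- that produces $\lambda_{i,j}\mapsto\lambda_{i,j+k}$. Your ``main obstacle'' paragraph about reconciling the $\chi(a_1)$-scaling with the $\chi(b)$-shift is therefore aimed at a nonexistent interaction; the interaction that actually needs checking is between the $\chi(a_1)$-scaling and the $y^k$-conjugation, which is benign because they act on disjoint data ($\lambda_1$ versus the cyclic index). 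With the mechanism corrected, the rest of your argument goes through: the $p^N$ values of $k$ act by shifting $\Lambda_{n-1}(\,\cdot\,)$, the stabilizer of $\Lambda_{n-1}(1)$ is $p^m\mathbb{Z}/p^N\mathbb{Z}$ by Lemma~\ref{tupleequality} and Corollary~\ref{minimalpower}, and the orbit has $p^m$ elements, each a distinct standard-form tuple $(\lambda_2,\ldots,\lambda_n)$.
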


We use the following proposition and lemma to help count standardization equivalent representations in each isoclass.

\begin{proposition}\cite[Proposition 6.1]{MaxClassI}\label{form}
Let $p\geq n$ and $\rho$ be a $p^N$-dimensional representation of $M_n$ with corresponding matrices in standard form. Then $\rho$ is irreducible if and only if there exists a $\lambda_i$  such that $s(\lambda_i)=N,$ where $2 \leq i \leq n.$
\end{proposition}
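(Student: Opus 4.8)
The plan is to exploit Corollary~\ref{VpN-1}, which says $\rho$ is irreducible if and only if $V_{p^{N-1}}$ is not $\rho$-stable, and to translate this into the combinatorial condition on the depths $s(\lambda_i)$ via Lemma~\ref{formx} and Lemma~\ref{tupleequality}. By Corollary~\ref{minimalpower}, $\V(\rho) = V_{p^j}$ exactly when $j$ is minimal with $\Lambda_n(1) = \Lambda_n(p^j+1)$, so $\rho$ fails to be irreducible precisely when $\Lambda_n(1) = \Lambda_n(p^{N-1}+1)$ (equivalently, when the smallest such $j$ is at most $N-1$). Thus the whole statement reduces to showing: some $\lambda_i$ with $2 \le i \le n$ has depth $N$ if and only if $\Lambda_n(p^{N-1}+1) \ne \Lambda_n(1)$, i.e.\ $\lambda_{i,\,p^{N-1}+1} \ne \lambda_{i,1}$ for some $i$ in that range (the first coordinate $\lambda_{1,k}$ is handled separately since twisting lets us ignore $\lambda_1$, or one checks it never obstructs irreducibility on its own).

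First I would write out, using Lemma~\ref{formx}, the explicit formula $\lambda_{i,j} = \prod_{k=i}^{n} \lambda_k^{T_{k-i}(j-1)}$, so that $\lambda_{i,\,p^{N-1}+1} = \prod_{k=i}^n \lambda_k^{T_{k-i}(p^{N-1})}$ while $\lambda_{i,1} = \lambda_i$. Since $p \ge n > k-i$ for all relevant indices, I can apply Lemma~\ref{tlemma}(ii) and (iii): $T_{k-i}(p^{N-1}) \equiv T_{k-i}(0) = 0 \bmod p^{N-1}$ for $k > i$, while the $k=i$ term contributes $\lambda_i^{T_0(p^{N-1})} = \lambda_i$. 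Hence $\lambda_{i,\,p^{N-1}+1} = \lambda_i \cdot \prod_{k=i+1}^n \lambda_k^{T_{k-i}(p^{N-1})}$, and each exponent $T_{k-i}(p^{N-1})$ is divisible by $p^{N-1}$. So $\lambda_k^{T_{k-i}(p^{N-1})} = 1$ for every $k$ with $s(\lambda_k) \le N-1$, and it is nontrivial only if some $\lambda_k$ with $i+1 \le k \le n$ has depth exactly $N$ and the corresponding exponent is not divisible by $p^N$. For the direction ``depth $N$ somewhere $\Rightarrow$ irreducible'', I would take the largest index $i_0 \ge 2$ with $s(\lambda_{i_0}) = N$ and check that in the product for $\lambda_{i_0-1,\,p^{N-1}+1}$ (or more carefully, for an appropriate row) the $\lambda_{i_0}$-exponent $T_1(p^{N-1}) = p^{N-1}$ is divisible by $p^{N-1}$ but not $p^N$, forcing $\lambda_{i_0-1,\,p^{N-1}+1} \ne \lambda_{i_0-1,1}$ and hence $V_{p^{N-1}}$ non-stable. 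For the converse, if every $\lambda_i$ with $2 \le i \le n$ has depth $\le N-1$, then all exponents $T_{k-i}(p^{N-1})$ being multiples of $p^{N-1}$ kill every factor, giving $\Lambda_n(p^{N-1}+1) = \Lambda_n(1)$ and so $V_{p^{N-1}}$ stable, i.e.\ $\rho$ reducible.

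The main obstacle I anticipate is the bookkeeping in the ``depth $N$ $\Rightarrow$ irreducible'' direction: one must be careful that the top-depth $\lambda_{i_0}$ actually produces a genuine inequality $\lambda_{i_0-1,\,p^{N-1}+1} \ne \lambda_{i_0-1,1}$ rather than having its contribution cancelled by higher-index factors, and one must use the constraint \eqref{cor1} from Lemma~\ref{formx} relating $\lambda_i^{p^N}$ to the other $\lambda_k$ to control the depths of the remaining entries. Choosing the \emph{largest} index $i_0$ with $s(\lambda_{i_0}) = N$ is what makes this clean, since then for the row $i_0 - 1$ the only factor with exponent exactly divisible by $p^{N-1}$ (and not $p^N$) is the $\lambda_{i_0}$ term itself, while all $\lambda_k$ with $k > i_0$ have depth $\le N-1$ and their exponents $T_{k-i_0+1}(p^{N-1})$ are divisible by $p^{N-1}$, so those factors vanish. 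The hypothesis $p \ge n$ is exactly what licenses the repeated use of Lemma~\ref{tlemma}(ii)--(iii), and I would flag where it is essential.
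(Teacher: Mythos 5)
Proposition~\ref{form} is quoted here from the prequel \cite{MaxClassI}; this paper contains no proof of it, so there is nothing in the present text to compare against line by line. Judged on its own, your outline is sound and is almost certainly the route the prequel takes: convert irreducibility into the condition $\Lambda_n(1)\neq\Lambda_n(p^{N-1}+1)$ via Corollaries~\ref{VpN-1} and~\ref{minimalpower}, then read this off the explicit diagonal entries of Lemma~\ref{formx} using the $p$-adic valuation of $T_m(p^{N-1})$. Taking the \emph{largest} $i_0\geq 2$ with $s(\lambda_{i_0})=N$ and inspecting row $i_0-1$ is exactly the right move, since then in
\[
\frac{\lambda_{i_0-1,\,p^{N-1}+1}}{\lambda_{i_0-1,1}}=\lambda_{i_0}^{\,p^{N-1}}\prod_{k>i_0}\lambda_k^{\,T_{k-i_0+1}(p^{N-1})}
\]
every factor after the first is killed; your converse direction likewise kills every factor in every row $1\le i\le n-1$, and row $n$ is constant because $x_n$ is scalar.

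Two points you should make explicit rather than merely flag. First, the whole case split tacitly needs $s(\lambda_k)\le N$ for every $2\le k\le n$: without it, ``some depth equals $N$'' and ``every depth $\le N-1$'' are not complementary, and more to the point the factors $\lambda_k^{T_{k-i_0+1}(p^{N-1})}$ with $k>i_0$ could fail to vanish if $s(\lambda_k)>N$, re-opening the cancellation you are trying to avoid by choosing $i_0$ maximal. You say the constraint~\eqref{cor1} from Lemma~\ref{formx} ``controls the depths,'' but you should actually deduce the bound from it; this is a second place (besides Lemma~\ref{tlemma}(ii)) where the hypothesis $p\ge n$ is used. Second, you cite Lemma~\ref{tlemma}(iii), which concerns $T_k(p^N-1)$, but what you actually invoke is part~(ii) with $j=0$, $b=N-1$, $\alpha=1$, giving $T_m(p^{N-1})\equiv T_m(0)=0\bmod p^{N-1}$ (and, for the ``$\Rightarrow$'' direction, the sharper fact that the valuation is \emph{exactly} $N-1$, which follows directly from $T_m(i)=\binom{i+m-1}{m}$). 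With those two items filled in, the argument is complete.
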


We slightly change the form of this lemma from its version in \cite{MaxClassI}.

\begin{lemma}\cite[Lemma 7.1]{MaxClassI}\label{nonexshout}
For $p\geq n-1$ let $\rho$ be an irreducible $p^N$-dimensional representation of $M_n$ and let $k=\max\{s(\lambda_3),\ldots s(\lambda_n)\}.$  Then there are $p^k$ representations in standard form equivalent to $\rho$ under twisting and standardization.
\end{lemma}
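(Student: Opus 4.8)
The plan is to prove Lemma~\ref{nonexshout} by combining Lemma~\ref{shout} with the structural information from Lemma~\ref{formx} and Proposition~\ref{form}. Lemma~\ref{shout} already tells us that the number of standard-form representations in the twist isoclass $S_\rho$ that are twist-and-shout equivalent to $\rho$ equals $p^m$, where $V_{p^m} = \V(\rho|_{M_{n-1}})$. So the entire content of the lemma reduces to the identity $m = k$, where $k = \max\{s(\lambda_3),\ldots,s(\lambda_n)\}$. To establish this, I would first invoke Corollary~\ref{minimalpower} applied to the restriction $\rho|_{M_{n-1}}$: the power $m$ is minimal such that $\Lambda_{n-1}(1) = \Lambda_{n-1}(p^m+1)$, where now $\Lambda_{n-1}(j) = (\lambda_{2,j},\ldots,\lambda_{n,j})$ records the diagonal entries of $x_2,\ldots,x_n$ (since $M_{n-1} = \langle a_2,\ldots,a_n,b\rangle$ in the paper's shifted indexing).

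The key computational step is to read off, from Lemma~\ref{formx}, exactly when $\Lambda_{n-1}(1) = \Lambda_{n-1}(p^m+1)$. For each $i$ with $2 \leq i \leq n$ we have $\lambda_{i,j} = \prod_{l=i}^{n}\lambda_l^{T_{l-i}(j-1)}$, so $\lambda_{i,1} = \lambda_i$ and $\lambda_{i,p^m+1} = \prod_{l=i}^n \lambda_l^{T_{l-i}(p^m)}$. Using Lemma~\ref{tlemma}(i) together with the hypothesis $p \geq n-1 > l-i$ (so the binomial coefficients $T_{l-i}(p^m) = \binom{p^m + (l-i) - 1}{l-i}$ are controlled modulo powers of $p$), I would argue that $\lambda_{i,p^m+1} = \lambda_{i,1}$ for all $i$ precisely when $p^m$ is large enough to kill every $\lambda_l$ with $l \geq 3$ in the relevant sense — that is, when $p^m \geq p^{s(\lambda_l)}$ for all such $l$, equivalently $m \geq k$. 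The minimality clause of Corollary~\ref{minimalpower} then forces $m = k$: if $m$ were strictly smaller than $k$, some $\lambda_l$ with $s(\lambda_l) = k$ would fail to have order dividing $p^m$, and the corresponding coordinate of $\Lambda_{n-1}$ would change.

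One subtlety I would need to handle carefully is the role of $\lambda_2$ versus $\lambda_3,\ldots,\lambda_n$. Since $\lambda_{2,j} = \lambda_2 \cdot \prod_{l=3}^n \lambda_l^{T_{l-2}(j-1)}$, the first coordinate of $\Lambda_{n-1}$ involves $\lambda_2$ only through a fixed factor $\lambda_2$ that is the same at $j=1$ and $j = p^m+1$ up to the remaining product; more precisely $\lambda_{2,p^m+1}/\lambda_{2,1} = \prod_{l=3}^n \lambda_l^{T_{l-2}(p^m)}$, which again depends only on $\lambda_3,\ldots,\lambda_n$. This is the reason $s(\lambda_2)$ does not enter the formula for $k$, and it is exactly parallel to why $s(\lambda_1)$ is irrelevant in Proposition~\ref{form}. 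I would also need the periodicity from Corollary~\ref{termequality} (or directly Lemma~\ref{tlemma}(ii)) to pass between ``$\Lambda_{n-1}(1) = \Lambda_{n-1}(p^m+1)$'' and the stability of the whole subspace $V_{p^m}$ under $\rho|_{M_{n-1}}$, rather than just at a single index.

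The main obstacle will be the modular-arithmetic bookkeeping in the second step: showing cleanly that $T_{l-i}(p^m) \equiv 0 \pmod{p^{s(\lambda_l)}}$ is necessary and sufficient, across all pairs $(i,l)$ simultaneously, for $\Lambda_{n-1}$ to be periodic with period $p^m$. This requires the hypothesis $p \geq n-1$ so that all the indices $l - i$ appearing are strictly less than $p$, allowing Lemma~\ref{tlemma}(ii) and (iii) to apply; without it the binomial coefficients $T_{l-i}(p^m)$ need not vanish to the expected $p$-adic order and the identity $m = k$ can fail (which is precisely the exceptional behaviour this paper's companion results address). Once $m = k$ is established, the conclusion ``there are $p^k$ representations in standard form equivalent to $\rho$ under twisting and shouting'' is immediate from Lemma~\ref{shout}.
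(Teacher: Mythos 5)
Your reduction is the right one: Lemma~\ref{shout} gives $p^m$ with $\V(\rho|_{M_{n-1}})=V_{p^m}$, Corollary~\ref{minimalpower} characterizes $m$ as the minimal exponent with $\Lambda_{n-1}(1)=\Lambda_{n-1}(p^m+1)$, and Lemma~\ref{formx} together with the exact $p$-adic valuation $v_p\bigl(T_j(p^m)\bigr)=m$ for $0<j<p$ (which is where $p\geq n-1$ enters) reduces everything to comparing $m$ with the depths $s(\lambda_3),\ldots,s(\lambda_n)$, with $\lambda_2$ cancelling just as you note. This matches the machinery the paper itself points to for this lemma.

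One small clarification on the ``necessary and sufficient across all pairs $(i,\ell)$'' phrasing: what actually has to vanish is each product $\prod_{\ell> i}\lambda_\ell^{T_{\ell-i}(p^m)}$, not each factor individually, and a priori nontrivial factors could cancel. The clean way to rule this out is to descend in $i$ from $n-1$ to $2$: at step $i$, all $\lambda_\ell$ with $\ell>i+1$ are already known to satisfy $m\geq s(\lambda_\ell)$, so (again using $v_p(T_{\ell-i}(p^m))=m$) the product collapses to the single factor $\lambda_{i+1}^{p^m}$, forcing $m\geq s(\lambda_{i+1})$. With that inserted, the minimal admissible $m$ is exactly $k=\max\{s(\lambda_3),\ldots,s(\lambda_n)\}$ and the lemma follows from Lemma~\ref{shout}.
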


\section{The $p$-local Representation Zeta Function for $M_{p+1}$}

For a prime $p$, we study the $p$-local representations of $M_{n}$ when $n=p+1$. We calculate the exceptional prime representation zeta function $\zeta^{irr}_{M_{p+1},p}(s).$ Note that, unlike the non-exceptional calculation in \cite{MaxClassI},  $p$ is fixed by our choice of group for this calculation.

 Let $\rho$ be a $p^N$-dimensional representation. We will determine the choices of $\lambda_i$ for which $\rho$ is irreducible. We can choose a basis of the form in Lemma \ref{formx}.

For this section let $p > 2$. The calculation when $p=2$ for the group $M_3$ is similar to, and more straightforward than, the calculation below and we leave the details to the reader. 

We now divide this calculation into two cases: when $s(\lambda_{p+1})=N$ and when $s(\lambda_{p+1})\leq N-1$. Furthermore, we break the second case into two sub-cases: when there is a $\lambda_i$ with $3\leq i \leq p$ such that $s(\lambda_i)=N$ and when there is no such $\lambda_i.$ 

Note that, since $p$ is not exceptional when considering $\rho|_{M_p}$ (and we remind the reader that $M_p=\langle x_2, \ldots, x_p, y \rangle$), we can apply Lemma \ref{nonexshout} when determining the number of representations standardization equivalent to some irreducible $\rho.$

\textbf{Case 1} Assume that $s(\lambda_{p+1})=N.$\\
By \cite{NM}, we have that $\rho|_{M_2}$ is an irreducible representation and thus $\rho$ is irreducible.  We now must determine for which choices of $\lambda_i$ the representation is well defined.

By Lemma \ref{tlemma}(iii), it is clear
that we can use Equation \ref{cor1} in Lemma \ref{formx} to show  that $s(\lambda_i) \leq N $ for $i \neq 2$ and
\begin{equation}
\lambda_2^{p^N}\prod_{k=3}^{p+1}\lambda_k^{T_{k-1}(p^N-1)}=1.
\end{equation}

Since $s(\lambda_i) \leq N $ for $3 \leq i \leq p$ by Lemma \ref{tlemma}(iii) the preceding equation simplifies to
\begin{equation} \label{welldefinedp}
\lambda_2^{p^N}\lambda_{p+1}^{T_p(p^N-1)} = 1.
\end{equation}
\noindent Cancelling out the $p$ from the denominator of $T_p(p^N-1)$, we have that
\begin{equation}\label{Tp}
T_p(p^N-1) = \alpha p^{N-1}
\end{equation} for some $\alpha$ coprime to $p$. Thus $s(\lambda_{p+1}^{\alpha p^{N-1}})=1$ and by Equation \ref{welldefinedp} we have that $s(\lambda_2)=N+1$. There are $p^N$ choices for $\lambda_2$ so that Equation \ref{welldefinedp} holds. Thus, there are $(1-p^{-1})p^N$ choices for $\lambda_{p+1}$ and $p^N$ choices for each $\lambda_{i}$ where $2 \leq i \leq p.$ By Lemma \ref{nonexshout} we must divide by $p^N$ to take standardization into account. Therefore in this case there are
\begin{align}\label{pcase1}
(1-p^{-1})p^N p^{(p-1)N}p^{-N} =(1-p^{-1})p^{(p-1)N}
\end{align}
twist isoclasses. Note that the right hand side of Equation $\ref{pcase1}$ is also the contribution to $r_{p^N}$ in the non-exceptional case in \cite{MaxClassI} for when $s(\lambda_{p+1})=N.$\\

\textbf{Case 2} Now assume $s(\lambda_{p+1}) \leq N-1.$\\
It is clear, since $T_i(p^N-1) = 0 \mod p^N$ for $i < p$ by Lemma \ref{tlemma}(iii) and since $\lambda_{p+1}^{T_{p}(p^N-1)} = 1$ by Equation \ref{Tp}, that we can say that $s(\lambda_i) \leq N $ for $2 \leq i \leq p+1.$ We now break this case into subcases.

\textbf{Case 2.1} For $i$ such that $3 \leq i \leq p,$ assume one of $s(\lambda_i)=N,$ say $\lambda_k.$ Then, since $p \geq k,$ by Proposition \ref{form} we have that $\rho|_{M_{p+1-k+2}}$ is an irreducible representation and thus $\rho$ is irreducible. In this case there are $(1-p^{-(p-2)})p^{(p-2)N}$ choices for $\lambda_i,$ $p^N$ choices for $\lambda_2,$ and $p^{N-1}$ choices for $\lambda_{p+1}.$ By Lemma \ref{nonexshout} we must divide by $p^N$ to take standardization into account. Thus there are
\begin{align}
(1-p^{-(p-2)})p^{(p-2)N}p^{N-1}p^Np^{-N} = (1-p^{-(p-2)})p^{(p-1)N-1}
\end{align}
twist isoclasses in this case. We note that the contribution to $r_{p^N}$ in this case is the same contribution to $r_{p^N}$ for non-exceptional primes \cite[Section 7]{MaxClassI}.\\

\textbf{Case 2.2} Assume $s(\lambda_i) \leq N-1$ where $3 \leq i \leq p.$\\
Note that in this case $\rho|_{M_p}$ has $V_{p^{N-1}}$ as a proper stable subspace so by Lemma \ref{VpN-1} it is not irreducible. If $s(\lambda_{p+1})=0$ then $M_{p+1}$ is isomorphic to $M_p$ and by Proposition \ref{form} the representation $\rho$ is irreducible if and only if $s(\lambda_2)=N$.

Now let $s(\lambda_{p+1})\geq 1.$ We choose $\lambda_*$ such that $s(\lambda_*)=N$ and write each $\lambda_i$ in terms of it; that is, let $\lambda_i = \lambda_*^{\alpha_i p^{m_i}},$ $p \nmid \alpha_i$, $m_{2} \geq 0$, and $m_i \geq 1$ for $3 \leq i \leq p+1.$


We appeal to Lemma \ref{minimalpower} and determine when $\langle y, x_1\rangle$ does not have  $V_{p^{N-1}}$ as a proper stable subspace.   This is the case exactly when $\lambda_{1,1} \neq \lambda_{1,p^{N-1}+1}.$ 

Consider $\lambda_{1,p^{N-1}+1}$. If $N=1,$ then in order for $\rho$ not to be trivial we have that $s(\lambda_2)=1$ and it is easily verified that $x_1$ is not scalar and thus $\rho$ is irreducible. Now, for $N \geq 2,$ we have that
\begin{align}
\Lambda :={}& \text{log}_{\lambda_*}(\lambda_{1,p^{N-1}+1})\\
={}& \alpha_2 p^{m_2} (p^{N-1}) + \alpha_3p^{m_3}\frac{(p^{N-1})( p^{N-1}+1)}{2}+\ldots \nonumber\\
&+ \alpha_{p+1} p^{m_{p+1}} \frac{(p^{N-1}) \ldots ( p^{N-1}+p-1)}{p!} \mod p^N.\nonumber
\end{align}

By Corollary \ref{termequality} and keeping in mind that $m_{i} \geq 1$ for $3 \leq i \leq p+1$ this simplifies to the following:

\begin{align}
\Lambda ={}& \alpha_2 p^{m_2} p^{N-1} + \alpha_{p+1} p^{m_{p+1}-1} \frac{p^{N-1} \ldots (p^{N-1}+p-1)}{(p-1)!} \mod p^N \\
={}&p^{N-1} \Big( \alpha_2 p^{m_2}+\alpha_{p+1} p^{m_{p+1}-1}\frac{(p^{N-1}+1) \ldots (p^{N-1}+p-1)}{(p-1)!}\Big) \mod p^N \nonumber
 \end{align}
Note that the last term has a denominator of $(p-1)!$ and the exponent of $p$ in that term is now $m_{p+1}-1.$ 

We want $\rho$ to be irreducible. Thus, by Wilson's Theorem it must be that
\begin{equation}\label{pqpqp}
\alpha_2 p^{m_2} + \alpha_{p+1}p^{m_{p+1}-1} \neq 0 \mod p.
\end{equation}
We now enumerate the cases when we do not have a factor of $p,$ and thus an irreducible representation. By Equation \ref{pqpqp} this is precisely when $m_{p+1}=1$ or $m_2=0$ except when $m_{p+1}= 1, m_2 = 0,$ and $\alpha_2 \neq -\alpha_{p+1} \mod p.$

We still need to take standardization into account. Therefore, by Lemma \ref{nonexshout}, we must divide our count, if we enumerated the representations in this case at this stage, by $p^{m_*}$ where $m_*= \text{max}\{s(\lambda_3), \ldots, s(\lambda_{p+1})\}$. Note that, since $p$ is non-exceptional when considering $\rho|_{M_{p}},$ the standardization behaviour is the same as in the non-exceptional case. We will count these later.\\

This ends the case distinctions.\\

We note that the only difference between the $r_{p^N}$ for this exceptional prime  and the $r_{p^N}$ for non-exceptional primes is the situation when we can choose $\lambda_{2}$ and $\lambda_{p+1}$ such that (still thinking of all $\lambda_i$ written as powers of $\lambda_*$) $m_{p+1}=1$ and $m_2 \geq 1$, which gives us additional irreducible representations, and when $m_{p+1}= 1,  m_2 = 0,$ and $\alpha_2 \neq -\alpha_{p+1} \mod p,$ which gives us representations that are no longer irreducible. Therefore, starting with $r_{p^N}$ calculated for non-exceptional primes, we can add the cases where our choices of $\lambda_i$ give us additional representations and subtract the cases where we lose representations.

Let $C$ be $r_{p^N}$ for non-exceptional primes, that is the sum in \cite[Section 7]{MaxClassI} The situation where $m_{p+1}=1$ and $m_{2} \geq 1$ does not correspond to irreducible representations for non-exceptional primes, but does for exceptional primes.  There are $(1-p^{-1})p^{N-1}$ choices for $\lambda_{p+1}$ and $p^{N-1}$ choices for $\lambda_{2}$ in this case. Remembering that we assumed that $s(\lambda_i) \leq N-1$ for $3 \leq i \leq p$ then there are $p^{(p-2)(N-1)}$ choices for these $\lambda_i.$ By Lemma \ref{nonexshout} we must divide by $p^{N-1}$ to take standardization into account. Therefore we must add

\begin{equation}\label{h}
(1-p^{-1})p^{N-1} p^{N-1} p^{(p-2)(N-1)}p^{-(N-1)} = (1-p^{-1})p^{(p-1)(N-1)}
\end{equation} to $C$.

The situation where $m_{p+1}=1,$ $m_2 = 0,$ and $\alpha_2 = -\alpha_{p+1} \mod p$  does correspond to irreducible representations for non-exceptional primes, but does not for exceptional primes. There are $(1-p^{-1})p^{N}$ choices for $\lambda_2$ and, given our choice for $\lambda_2$, there are $p^{N-2}$ choices for $\lambda_{p+1}$ in this case. Remembering that we assumed that $s(\lambda_i) \leq N-1$ for $3 \leq i \leq p$ there are $p^{(p-2)(N-1)}$ choices for these $\lambda_i.$ By Lemma \ref{nonexshout} we must divide by $p^{N-1}$ to take standardization into account. Therefore we must subtract
\begin{equation}\label{hh}
(1-p^{-1})p^{N} p^{N-2}p^{(p-2)(N-1)}p^{-(N-1)} =(1-p^{-1})p^{(p-1)(N-1)}
\end{equation} from $C$. Notice that $(\ref{h})=(\ref{hh}).$ Therefore
\begin{equation}
r_{p^N} = C
\end{equation} and
\begin{equation}
\zeta^{irr}_{M_{p+1},p}(s) =\frac{(1-p^{-s})^2}{(1-p^{((p+1)-2)-s})(1-p^{1-s})}
\end{equation} by \cite[Equation 31]{MaxClassI}.\\

This result, and the result from \cite{MaxClassI}, gives us the entire irreducible representation theory, as well as the representation zeta function, of $M_3.$ In fact, we can say that

\begin{equation}\label{M32}
\zeta^{irr}_{M_3}(s)= \left(\frac{\zeta(s-1)}{\zeta(s)}\right)^2.
\end{equation}

\section{The $2$-local Representation Zeta Function for $M_4$}\label{sub:Mp+1}

We now have a complete understanding of the irreducible representations of $M_3.$ The aim of this section is to do the same for $M_4.$ Our previous work leaves us with only one $p$-local zeta function to calculate; the previous section calculates the $3$-local zeta function and $2$ and $3$ are the only exceptional primes. Therefore once we calculate the $2$-local representation zeta function we have $\zeta^{irr}_{M_4}(s)$ in its entirety. 

The complexity of this calculation lies partly in the inability to use Lemma \ref{nonexshout} in many cases. Thus, some work is to be done to calculate the correct factor by which we are overcounting. 

For ease of computation, we calculate $r_2(M_4)$ separately later in this section. Until noted otherwise we assume the condition that $N \geq 2$. In keeping with the style of the general cases earlier, and for elucidation if one wishes to generalize this calculation, we do not simplify the expressions $(1-2^{-1})$ to $2^{-1}$ as far in the calculation as possible.

Let $\rho:M_4 \to GL_{2^N}(\mathbb{C})$ be a representation. By Equation \ref{cor1} in Lemma \ref{formx} we have that
\begin{equation}
\lambda_4^{2^N}=1, \label{c}
\end{equation}
\begin{equation}
\lambda_3^{2^N}\lambda_4^{T_2(2^N-1)}=1, \label{cc}
\end{equation} and
\begin{equation}
\lambda_2^{2^N}\lambda_3^{T_2(2^N-1)}\lambda_4^{T_3(2^N-1)}=1.\label{ccc}
 \end{equation} Therefore, by Equation \ref{c}, we have that $s(\lambda_4) \leq N.$

Before we begin counting twist isoclasses we must determine the possible depths of $\lambda_2$ and  $\lambda_3$. We remind the reader of Equation \ref{Tp}. Assume $ s(\lambda_4) \leq N-1.$ Then   $\lambda_4^{T_2(2^N-1)}= \lambda_4^{T_3(2^N-1)} = 1$ and by Equation \ref{cc} we have that $\lambda_3^{2^N}=1$ and thus $s(\lambda_3) \leq N.$ If $s(\lambda_3) \leq N-1$ then $\lambda_3^{T_2(2^N-1)}=1$ and by Equation \ref{ccc} we have that $\lambda_2^{2^N}=1$ so $s(\lambda_2) \leq N.$ If $s(\lambda_3)=N$ then $\lambda_3^{T_2(2^N-1)}= \lambda_3^{-2^{N-1}}$ and thus $s(\lambda_3^{-2^{N-1}})= 1$. By Equation \ref{ccc} we have that $\lambda_2^{2^N} = \lambda_3^{2^{N-1}}$ and thus $\lambda_2^{2^N}$ must satisfy this equation. So $\lambda_2^{2^N}=-1$ and $s(\lambda_2)=N+1.$

Now assume $s(\lambda_4) = N.$ Then $\lambda_4^{T_2(2^N-1)}= \lambda_4^{T_3(2^N-1)}= \lambda_4^{-2^{N-1}}$ and thus $s(\lambda_4^{-2^{N-1}})= 1$. By Equation \ref{cc} we have that
\begin{equation} \label{pnun}
\lambda_3^{2^N}=\lambda_4^{2^{N-1}}
\end{equation}
and thus $\lambda_3^{2^N}$ must satisfy this equation. So $\lambda_3^{2^N}=-1$ and $s(\lambda_3)=N+1.$ We have that $\lambda_3^{T_2(2^N-1)}= \lambda_3^{-2^{N-1}}$ and by Equations \ref{ccc} and \ref{pnun}  we have that $\lambda_2^{2^N}= \lambda_3^{2^{N-1}}\lambda_4^{2^{N-1}} = \lambda_3^{2^N+2^{N-1}}= \lambda_3^{(1+2)2^{N-1}}.$ Note that we leave $(1+2)$ in this form since we wish to keep the form $(1+p).$ Thus $s(\lambda_3^{(1+2)2^{N-1}})=2$ and $\lambda_2^{2^N}$ must satisfy Equation \ref{ccc}. So $\lambda_2^{2^N}= \pm \sqrt{-1}$ and  $s(\lambda_2)=N+2.$

\begin{table}[htpb]\caption{Table of Cases for $M_4$}\label{tab:M4cases}
\scalebox{0.85}{
\begin{tabular}  {|c||c|c|c|p{3cm}|p{3.5cm}|}
 \hline
Case & $s(\lambda_4)$ & $s(\lambda_3)$ & $s(\lambda_2)$ & Other Conditions & No. of twist isoclasses, $N \geq 2$\\
\hline
1 & $=N$ & $=N+1$ & $=N+2$ & & $(1-2^{-1})^42^{2N+3}$\\ \hline
2 & $=N-1$ & $=N$ & $=N+1$ & $\alpha_3 = 3 \mod 4$ & $(1-2^{-1})^3 2^{2N}$\\ \hline
3 & $=N-1$ & $\leq N-1$ & $\leq N$ & & $(1-2^{-1})2^{2N-2}$\\ \hline
4 & $\leq N-2$ & $=N$ & $=N+1$ & & $(1-2^{-1})^2 2^{2N-1}$\\ \hline
5 & $\leq N-2$ & $=N-1$ & $=N$ & & $0$\\ \hline
6 & $\leq N-2$ & $\leq N-2$ & $=N$ & & See Table \ref{tab:Case6} on page \pageref{tab:Case6}\\ \hline
7 & $\leq N-2$ & $=N-1$ & $\leq N-1$ &  & See Table \ref{tab:Case7} on page \pageref{tab:Case7}\\ \hline
8 & $\leq N-2$ & $\leq N-2$ & $\leq N-1$ & & $0$\\
\hline
\end{tabular}}

\end{table}

\begin{table}[htpb]\caption{Case 6 of Table \ref{tab:M4cases}}\label{tab:Case6}
\scalebox{0.85}{
\begin{tabular}{|c||c|p{7cm}|p{5cm}|}
\hline
$N$ & Case & Relationship of $s(\lambda_3)$ and $s(\lambda_4)$ & No. of twist isoclasses \\
\hline
$=2$ & 6.4 & $s(\lambda_3)=s(\lambda_4)=0$ & 2\\
\hline
\multirow{2}{*} {=3} & 6.2 & $s(\lambda_3)\leq 1, s(\lambda_4)=1$ &  $(1-2^{-1})^2 2^3$\\
 & 6.4 & $s(\lambda_4)=0$ & $(1-2^{-1})2^3(1+(1-2^{-1}))$ \\


\hline
\multirow{5}{*} {$\geq 4$} & 6.1 & ${s(\lambda_3)>s(\lambda_4)+1}, {s(\lambda_3) \geq 2}, \linebreak[2] {s(\lambda_4) \neq 0}$ & $\big[(1-2^{-1})2^N\big((2^{N-4}-1){\qquad} \linebreak[4] -(1-2^{-1})(N-4)\big)\big]$\\
& 6.2 & $s(\lambda_3) < s(\lambda_4)+1, s(\lambda_4)\neq 0$ & $(1-2^{-1})2^N(2^{N-3}-2^{-1})$\\
& 6.3 &$s(\lambda_3)=s(\lambda_4)+1, s(\lambda_4) \neq 0$ & $(1-2^{-1})^2 2^N (2^{N-2}-2)$\\
& 6.4 &$s(\lambda_4)=0$ & $(1-2^{-1})^2 2^N(1+(1-2^{-1})(N-2))$\\

\hline
\end{tabular}
}
\end{table}

\begin{table}[htpb]\caption{Case 7 of Table \ref{tab:M4cases}}\label{tab:Case7}
\scalebox{0.85}{
\hspace{4cm}
\begin{tabular}{|c||c|p{5cm}|}
\hline
$N$ & Case &  No. of twist isoclasses\\
\hline
$=2$ & &1\\
\hline
\multirow{2}{*} {$\geq 3$} & 7.1 & $(1-2^{-1})2^{2N-4}$\\
 & 7.2 & $(1-2^{-1})^2 2^{2N-2}$\\
\hline
\end{tabular}
}
\end{table}

We break our computation into eight cases, with Cases 6 and 7 being further broken down into subcases.  Tables 1,2, and 3 show, repsectively, the number of twist isoclasses in each case, for the subcases of Case 6, and for the subcases of Case 7. We leave the computation of Cases 1,3, and 4 to the reader; these follow almost immediately from previous computations. 

\vspace{1cm}

\textbf{Case 2}: By Case $2.2$ of Section \ref{sub:Mp+1} we have that $\rho|_{M_3}$ is reducible. Appealing to Lemma \ref{VpN-1} we must check whether $V_{2^{N-1}}$ is a stable subspace of $\langle y,x_1 \rangle.$ We write each root of unity in terms of a primitive $2^{N+1}$th one. Let $\lambda_* = \lambda_2, \  \lambda_i = \lambda_*^{\alpha_i 2^{m_i}}$ for some $\alpha_i$ such that $2 \nmid \alpha_i, m_4=2,$ $m_3=1,$ and $i \in \{3,4\}$.

Using Corollary Corollary \ref{minimalpower} and noting that $2\cdot(2^{N-1})^2 = 0 \mod 2^{N+1}$ for $N=2$, consider $\lambda_{1,2^{N-1}+1}$:

\begin{align}
& \log_{\lambda_*}(\lambda_{1,2^{N-1}+1})\\
={}& (2^{N-1}) + 2^{1-1}\alpha_3 (2^{N-1})(2^{N-1}+1)\nonumber\\
&+ \alpha_4 2^{2-1} \frac{(2^{N-1})(2^{N-1}+1)(2^{N-1}+2)}{3} \mod 2^{N+1}\nonumber\\
={}& 2^{N-1}\left(1+\alpha_3+\alpha_4 2^1 \frac{2}{3}\right) \mod 2^{N+1}\nonumber\\
={}& \text{log}_{\lambda_*}(\lambda_{1})+ 2^{N-1}\left[1+\alpha_3\right] \mod 2^{N+1}\nonumber
\end{align}

So the expression in the square brackets above is a multiple of 4 if and only if $V_{2^{N-1}}$ is a $\langle y,x_1 \rangle $-stable subspace.  Let $Q$ be the aforementioned expression. It is clear that $Q=0 \mod 4$ precisely when $\alpha_3=3 \mod 4.$ This means that we are only free to choose half of the elements of $S_2^N/S_2^{N-1}$ for $\lambda_3.$ Thus, there are $(1-2^{-1})2^{N-1}$ choices for $\lambda_3$, $(1-2^{-1})2^{N+1}$ choices for $\lambda_2$, and $(1-2^{-1})2^{N-1}$ choices for $\lambda_4.$ Since $\rho|_{M_3}$ is not irreducible it has at least $V_{2^{N-1}}$ as a stable subspace. But since $s(\lambda_4)=N-1,$ by Corollary \ref{inclusionV} we have that $\V(\rho|_{M_3})=V_{2^{N-1}}.$ Thus, by Lemma \ref{shout} we must divide by $2^{N-1}$ to take standardization into account. So in this case we have
\begin{align}\label{m4case2}
&(1-2^{-1})2^{N-1}(1-2^{-1})2^{N+1}(1-2^{-1})2^{N-1} 2^{-(N-1)}\\
={}&(1-2^{-1})^3 2^{2N}\nonumber
\end{align}
twist isoclasses.

\textbf{Cases 5 and 6}: We note  $s(\lambda_2)=N$ and $s(\lambda_4)\leq N-2$ for both cases. We have, by Case 2.2 of Section \ref{sub:Mp+1}, that $\rho|_{M_3}$ has $V_{2^{N-1}}$ as a proper stable subspace. Appealing to Lemma \ref{VpN-1}, we check whether $V_{2^{N-1}}$ is a stable subspace of $\langle y,x_1 \rangle.$ We let $\lambda_*=\lambda_2$ and write each $\lambda_i$ as a power of $\lambda_*;$ that is, let $\lambda_4 = \lambda_*^{\alpha_4 2^{m_4}}$ and $\lambda_3 = \lambda_*^{\alpha_3 2^{m_3}}$ such that $2 \nmid \alpha_i$ $m_3 \geq 1,$  $m_4 \geq 2,$ and $i \in \{3,4\}.$
If $m_4=N$ then by Case 2.2 of Section \ref{sub:Mp+1} we have that $\rho$ is irreducible if and only if $m_3 \neq 1.$ If $m_3=N$ it is easy to show that $\log_{\lambda_*}(\lambda_{1,2^{N-1}+1}) \neq 1.$ We leave this to the reader. Assume that $m_3,m_4 \neq N.$

Appealing to Corollary \ref{minimalpower}, consider $\lambda_{1,2^{N-1}+1}$, noting that $2^{2N-2} = 0 \mod 2^N:$
\begin{align}
\Lambda :={}& \text{log}_{\lambda_*} (\lambda_{1,2^{N-1}+1})\\
 ={}& (2^{N-1}) + \alpha_3 2^{m_3-1} (2^{N-1})(2^{N-1}+1)\nonumber\\
 &+ \alpha_4 2^{m_4-1} \frac{2^{N-1}(2^{N-1}+1)(2^{N-1}+2)}{3} \mod 2^N\nonumber\\
  ={}& \text{log}_{\lambda_*}(\lambda_{1}) + 2^{N-1}\left[1+\alpha_3 2^{m_3-1}\right] \mod 2^N. \nonumber
\end{align}

So when the term in the square brackets above, say $Q$, is  not $0 \mod 2$ then $\lambda_{1} \neq \lambda_{1, 2^{N-1}+1}.$ It follows that $V_{2^{N-1}}$ is not a stable subspace of $\rho$ and therefore $\rho$ is irreducible. Thus $Q$ is $0 \mod 2$ when $m_3=1;$  that is when $s(\lambda_3)= N-1.$ So in Case 5 there are no irreducible representations.

If $m_3 \geq 2$ it is clear that $Q \neq 0 \modd 2.$ Thus, in Case 6 there are $2^{N-2}$ choices for $\lambda_4$, $2^{N-2}$ choices for $\lambda_3$, and $(1-2^{-1})2^N$ choices for $\lambda_2.$

We now need to analyze the standardization behaviour for this case. It is clear, since $\V(\rho|_{M_2})=V_{2^{s(\lambda_4)}}$ by Lemma \ref{nonexshout} and Corollary \ref{inclusionV}, that there are at least $2^{s(\lambda_4)}=2^{N-m_4}$ representations standardization equivalent to $\rho$. We now determine $\V(\rho|_{M_3})$ for each possible choice of $m_3$ and $m_4.$  Let $m_4 \neq N.$ We deal with the case $m_4=N$ in the next lemma. Also, note that we use the power of Corollary \ref{minimalpower} for this computation.

Consider, for some $k$ such that $1\leq k \leq m_4,$
\begin{align}\label{Case6}
&\text{log}_{\lambda_*}(\lambda_{2, 2^{N-k}+1})-\text{log}_{\lambda_*}(\lambda_{2,1})\\
        ={}&\alpha_3 2^{m_3} 2^{N-k}+\alpha_4 2^{m_4-1} 2^{N-k}( 2^{N-k}+1) \mod 2^N \nonumber \\
        ={}&2^{N-k}\left[\alpha_3 2^{m_3}+\alpha_4 2^{m_4-1}(2^{N-k}+1)\right] \mod 2^N. \nonumber
\end{align}

For the following lemma let $Q$ be the sum in the square brackets above. By Lemma \ref{minimalpower}, if $Q = 0 \mod 2^k$ then $\lambda_{2,1}=\lambda_{2, 2^{N-k}+1}$ and $V_{2^k}$ is a proper stable subspace of $\rho|_{M_3}.$

\begin{lemma}\label{shoutM4}
Let $m_4 \neq N$ and let $m_*=\min\{m_3,m_4-1\}.$ If $m_3 \neq m_4-1$ then $\V(\rho|_{M_3})=V_{2^{N-m_*}}.$ If $m_3=m_4-1$ then $\V(\rho|_{M_3})=V_{2^{N-m_4}}.$

If $m_4=N$ then $\V(\rho|_{M_3})=V_{2^{N-m_3}}.$
\end{lemma}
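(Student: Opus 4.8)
The plan is to extract the minimal stable subspace $\V(\rho|_{M_3})$ directly from Equation \eqref{Case6} by determining, for each $k$ with $1\le k\le m_4$, the $2$-adic valuation of the bracketed quantity $Q=\alpha_3 2^{m_3}+\alpha_4 2^{m_4-1}(2^{N-k}+1)$, and then invoking Corollary \ref{minimalpower} (in the form stated: $j$ is minimal with $\Lambda_n(1)=\Lambda_n(2^j+1)$ iff $\V(\rho|_{M_3})=V_{2^j}$, here applied to $M_3$ via the diagonal entry $\lambda_{2,\cdot}$). Concretely, Equation \eqref{Case6} shows that $V_{2^k}$ is a proper stable subspace of $\rho|_{M_3}$ exactly when $2^N \mid 2^{N-k}Q$, i.e. when $v_2(Q)\ge k$; so $\V(\rho|_{M_3})=V_{2^{N-m}}$ where $m$ is the largest value of $k\le m_4$ for which $v_2(Q)\ge k$ holds, with the convention that if even $k=m_4$ fails we must go outside this range and the minimal stable subspace is governed by $\rho|_{M_2}$, giving $V_{2^{N-m_4}}$ (this is consistent with Corollary \ref{inclusionV}, which forces $\V(\rho|_{M_3})\subseteq \V(\rho|_{M_2})=V_{2^{N-m_4}}$).

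First I would treat the generic case $m_3\ne m_4-1$. Write $m_*=\min\{m_3,m_4-1\}$. Since $2^{N-k}+1$ is odd, the term $\alpha_4 2^{m_4-1}(2^{N-k}+1)$ has $2$-adic valuation exactly $m_4-1$ (using $2\nmid\alpha_4$), and $\alpha_3 2^{m_3}$ has valuation exactly $m_3$; as these two valuations are distinct, $v_2(Q)=m_*$ independently of $k$. Hence for $k\le m_*$ we have $v_2(Q)\ge k$ and $V_{2^k}$ is stable, while for $k=m_*+1$ (which is $\le m_4$ in all the relevant configurations — I would check the boundary cases $m_*=m_4-1$, i.e. $m_3\ge m_4-1$, separately, noting $v_2(Q)=m_4-1$ still forces $V_{2^{m_4-1}}$ stable and $V_{2^{m_4}}$ not, and $N-m_4 = N-m_*+{}$... careful: here $m_*=m_4-1$ so $N-m_*=N-m_4+1$, but then the next value $k=m_4$ gives $2^{N-m_4}Q$ with $v_2=2N-m_4-1\ge N$ iff $N\ge m_4+1$, which holds since $m_4\le N-2<N$, so actually $V_{2^{N-m_4}}$ \emph{is} stable and the minimal one is $V_{2^{N-m_4}}$ — I would double-check this matches the claimed $V_{2^{N-m_*}}$, and if not, the statement's "$m_3\ne m_4-1$" branch must be read with $m_*=\min\{m_3,m_4-1\}$ where $m_3<m_4-1$, giving $m_*=m_3<m_4-1$ strictly). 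Taking $m_*=m_3$ in the strict case, $v_2(Q)=m_3$, so $V_{2^{m_3}}$ is the largest stable $V_{2^k}$ in range, whence by Corollary \ref{minimalpower} the minimal stable subspace is $V_{2^{N-m_3}}=V_{2^{N-m_*}}$.

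Next I would handle the degenerate case $m_3=m_4-1$, where the two terms of $Q$ have equal valuation $m_3=m_4-1$ and can cancel: $Q = 2^{m_4-1}\big(\alpha_3 + \alpha_4(2^{N-k}+1)\big) = 2^{m_4-1}\big((\alpha_3+\alpha_4) + \alpha_4 2^{N-k}\big)$. Since $\alpha_3,\alpha_4$ are odd, $\alpha_3+\alpha_4$ is even, so $v_2(Q)\ge m_4$; I would argue $v_2(Q)\ge m_4 = k$ holds for $k=m_4$ (and, since $N-k=N-m_4\ge 2$ and the extra $\alpha_4 2^{N-k}$ term only raises valuation, the valuation is at least $m_4$ uniformly), so $V_{2^{m_4}}$ is stable and $\V(\rho|_{M_3})=V_{2^{N-m_4}}$ as claimed. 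Finally, for $m_4=N$, Equation \eqref{Case6} no longer applies (the range $1\le k\le m_4$ would include $k$ up to $N$ but $\lambda_4$ now genuinely contributes at depth $N$), so I would instead revisit the depth-$(2^{N-1}+1)$ computation: the relevant diagonal comparison for $\rho|_{M_3}$ reduces, after the $T_k$-simplifications of Corollary \ref{termequality}, to the bracket $[\,\alpha_3 2^{m_3-1}\,]$ governing whether $V_{2^{N-1}}$ is stable, and more generally iterating $\lambda_{2,2^{N-k}+1}$ isolates $\alpha_3 2^{m_3}$, giving $v_2 = m_3$ and hence $\V(\rho|_{M_3})=V_{2^{N-m_3}}$.

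The main obstacle I anticipate is the bookkeeping at the boundary $k=m_4$ and the interaction between "the largest $k\le m_4$ with $V_{2^k}$ stable" and the a priori bound $\V(\rho|_{M_3})\subseteq V_{2^{N-m_4}}$ from Corollary \ref{inclusionV}: I need to be sure that when $v_2(Q)$ would exceed $m_4$ I correctly stop at $V_{2^{N-m_4}}$ rather than claiming a smaller subspace, i.e. that the $M_2$-constraint is the binding one exactly in the $m_3\ge m_4-1$ regime. Verifying that the case split "$m_3\ne m_4-1$ versus $m_3=m_4-1$" in the statement really is the clean dividing line — as opposed to "$m_3<m_4-1$ versus $m_3\ge m_4-1$" — is the delicate point, and I would resolve it by carefully recomputing $v_2(Q)$ in the sub-case $m_3=m_4$ (still $\ne m_4-1$) and checking it gives $v_2(Q)=m_4-1<m_4$, consistent with $\V=V_{2^{N-(m_4-1)}}=V_{2^{N-m_*}}$ since there $m_*=\min\{m_3,m_4-1\}=m_4-1$.
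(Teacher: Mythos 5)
Your overall plan mirrors the paper's own proof: read off the minimal stable subspace via Corollary~\ref{minimalpower} by computing the $2$-adic valuation of the bracketed term $Q$ in Equation~\eqref{Case6}, split on whether the two terms of $Q$ share a valuation (i.e.\ whether $m_3=m_4-1$), and treat $m_4=N$ separately since the $\lambda_4$ contribution drops out of $Q$. That part is fine. But the long parenthetical in which you try to settle the boundary case $m_3\ge m_4$ contains a genuine error that derails the argument there, and you end up (wrongly) doubting the lemma's case split.

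Concretely: when $m_3\ge m_4$ we have $m_*=m_4-1$ and $v_2(Q)=m_4-1$, so at $k=m_4$ one gets $v_2\bigl(2^{N-m_4}Q\bigr)=(N-m_4)+(m_4-1)=N-1$, which is strictly less than $N$. You instead wrote $v_2=2N-m_4-1$ and concluded that $V_{2^{N-m_4}}$ is stable and minimal; the opposite is true. Since $v_2\bigl(2^{N-m_4}Q\bigr)=N-1<N$, the $\lambda_2$-diagonal gives $\lambda_{2,1}\neq\lambda_{2,2^{N-m_4}+1}$, so $V_{2^{N-m_4}}$ is \emph{not} stable, whereas at $k=m_4-1=m_*$ the valuation is exactly $N$ and $V_{2^{N-m_*}}$ \emph{is} stable. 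So the lemma is correct as stated for \emph{all} $m_3\neq m_4-1$, including $m_3\ge m_4$; there is no need to restrict to $m_3<m_4-1$, and the alternative reading you float is unnecessary. Your closing paragraph does recompute $v_2(Q)=m_4-1$ for $m_3=m_4$ and reaches the right conclusion, so the error is self-corrected, but as written the middle of the proof asserts a false claim and then questions the statement being proved.

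A second, smaller point: Corollary~\ref{inclusionV} is used in the wrong direction. It gives $\V(\rho|_{M_3})\supseteq\V(\rho|_{M_2})=V_{2^{N-m_4}}$, not $\subseteq$; adding generators can only force a \emph{larger} minimal stable subspace. This is precisely what you need in the $m_3=m_4-1$ branch: there $v_2(Q)\ge m_4$, so the $\lambda_2$-diagonal alone would allow $j_2\le N-m_4$, and it is the $\lambda_3$-diagonal (equivalently the $M_2$-restriction and Corollary~\ref{inclusionV}) that pins $\V(\rho|_{M_3})$ \emph{up} to exactly $V_{2^{N-m_4}}$. Your intuition about which constraint is ``binding'' is right, but the stated inclusion is reversed.
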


\begin{proof}
Assume $m_4 \neq N.$ If $m_3 \neq m_4-1$ the maximum value of $k$ such that $Q = 0 \mod 2^k$ is $\min\{m_3,m_4-1\}.$ If $m_3=m_4-1$ then, since both terms in $Q$ are of the same $2$-adic valuation, the maximal value of $k$ is at least $m_4.$ However, since $\V(\rho|_{M_2})=V_{2^{s(\lambda_4)}},$ by Corollary \ref{inclusionV} it follows that $\V(\rho|_{M_3})=V_{2^{s(\lambda_4)}}.$

Now let $m_4=N.$ Then,
\begin{align}\label{Case6b}
&\text{log}_{\lambda_*}(\lambda_{2, 2^{N-k}+1})-\text{log}_{\lambda_*}(\lambda_{2,1})\\
        ={}&\alpha_3 2^{m_3} 2^{N-k} = 0 \mod 2^N\nonumber
\end{align}
when $k \leq m_3.$ Thus $k$ is maximal when $k=m_3$ and $\V(\rho|_{M_3})=V_{2^{s(\lambda_3)}}$ when $m_4=N.$
\end{proof}

We now count the number of twist isoclasses. To do this we break the computation into four subcases. Note that, in all subcases, there are $(1-2^{-1})2^N$ choices for $\lambda_2.$ For the first three subcases we assume that $s(\lambda_4)\neq 0.$

\textbf{Case 6.1} For some $M$ such that $2 \leq M \leq N-2$, let $s(\lambda_3)=M > s(\lambda_4)+1.$ We have that there are $(1-2^{-1})2^M$ choices for $\lambda_3$ and $2^{M-2}-1$ choices for $\lambda_4.$ Since  $s(\lambda_3)=M > s(\lambda_4)+1$ by Lemmas \ref{shout} and \ref{shoutM4} we must divide by $2^{M}$ to take standardization into account. Thus, in this subcase there are
\begin{align}
    &(1-2^{-1})2^N \sum_{M=2}^{N-2} (1-2^{-1})2^M(2^{M-2}-1)2^{-M}\\
 ={}&(1-2^{-1})2^N \big((2^{N-4}-1)-(1-2^{-1})(N-4)\big)\nonumber
\end{align}
twist isoclasses. Note that when $M=2$ we have that $(2^{M-2}-1)=0.$

\textbf{Case 6.2} For some $M$ such that $1 \leq M \leq N-2,$ let $s(\lambda_4)=M$ and $s(\lambda_3) < s(\lambda_4)+1=M+1.$  There are $(1-2^{-1})2^M$ choices for $\lambda_4$ and $2^M$ choices for $\lambda_3.$ By Lemmas \ref{shout} and \ref{shoutM4} we must divide by $2^{M+1}$ to take standardization into account. Thus in this subcase there are
\begin{align}
    &(1-2^{-1})2^N \sum_{M=1}^{N-2} (1-2^{-1})2^M2^{M}2^{-(M+1)}\\
 ={}&(1-2^{-1})2^N (2^{N-3}-2^{-1})\nonumber
\end{align}
twist isoclasses.

\textbf{Case 6.3} For some $M$ such that $2 \leq M \leq N-2$, let $s(\lambda_3)=M = s(\lambda_4)+1.$  We have that there are $(1-2^{-1})2^M$ choices for $\lambda_3$ and $(1-2^{-1})2^{M-1}$ choices for $\lambda_4.$ Since  $s(\lambda_3)=M = s(\lambda_4)+1$ by Lemmas \ref{shout} and \ref{shoutM4} we must divide by $2^{M-1}$ to take standardization into account. Thus, in this subcase there are
\begin{align}
    &(1-2^{-1})2^N \sum_{M=2}^{N-2} (1-2^{-1})^2 2^M2^{M-1}2^{-(M-1)}\\
 ={}&(1-2^{-1})^2 2^N (2^{N-2}-2)\nonumber
\end{align}
twist isoclasses.

\textbf{Case 6.4} Assume $s(\lambda_4)=0.$ Let $s(\lambda_3)=M$ for $0 \leq M \leq N-2.$ If $M>0$ there are $(1-2^{-1})2^M$ choices for $\lambda_3$ and there is $1$ choice for $\lambda_3$ if $M=0.$ There is only $1$ choice for $\lambda_4.$ By Lemmas \ref{shout} and \ref{shoutM4} we must divide by $2^M$ to take standardization into account. Thus in this subcase there are
\begin{align}
&(1-2^{-1})2^N\left(1+\sum_{M=1}^{N-2}(1-2^{-1})2^M2^{-M}\right)\\
={}&(1-2^{-1})2^N\big(1+(1-2^{-1})(N-2)\big)\nonumber
\end{align}
twist isoclasses.

This ends the subcase distinctions.\\

Thus, summing together all subcases we have that there are
\begin{align}\label{m4case6}
   &(1-2^{-1})2^N\big((2^{N-4}-1)-(1-2^{-1})(N-4)\big)+(1-2^{-1})2^N(2^{N-3}-2^{-1})\\
   &+(1-2^{-1})^2 2^N(2^{N-2}-2)+(1-2^{-1})2^N\big(1+(1-2^{-1})(N-2)\big) \nonumber \\
={}& (1-2^{-1})2^N(2^{N-2}+2^{N-4}-2^{-1})\nonumber
\end{align}
twist isoclasses in Case 6 when $N \geq 4.$ When $N=3$ we sum together Cases 6.2 and 6.4. Thus there are
\begin{align}
   &(1-2^{-1})2^3\left( (1-2^{-1})+1+(1-2^{-1}) \right)\\
={}& 8 \nonumber
\end{align}
twist isoclasses in Case 6. When $N=2$ we only include Case 6.4 and thus there are
\begin{equation}
(1-2^{-1})2^2(1)=2
\end{equation}
twist isoclasses in Case 6.

\textbf{Cases 7 and 8}: We note for both cases $s(\lambda_2)\leq N-1$ and $s(\lambda_4)\leq N-2.$  By Case 2.2 of Section \ref{sub:Mp+1},  we have that $\rho|_{M_3}$ has $V_{2^{N-1}}$ as a proper stable subspace, as with the previous two cases. We check whether $V_{2^{N-1}}$ is a stable subspace of $\langle y,x_1 \rangle.$
As usual, we choose a $\lambda_* \in S_2^{N}/S_2^{N-1}$ and write each $\lambda_i$ as a power of $\lambda_*$; that is $\lambda_i = \lambda_*^{\alpha_i 2^{m_i}}$ such that $2 \nmid \alpha_i,$ $m_2 \geq 1,$ $m_3 \geq 1,$  $m_4 \geq 2,$ and $i \in \{2,3,4\}.$

Appealing to Corollary \ref{minimalpower}, consider $\lambda_{1,2^{N-1}+1}$, noting that $2^{2N-2} = 0 \mod 2^N$:
\begin{align}
\Lambda :={}& \text{log}_{\lambda_*} (\lambda_{1,2^{N-1}+1})\\
 ={}& \alpha_2 2^{m_2} 2^{N-1} + \alpha_3 2^{m_3-1} 2^{N-1}(2^{N-1}+1)\nonumber\\
 &+ \alpha_4 2^{m_4-1} \frac{2^{N-1}(2^{N-1}+1)(2^{N-1}+2)}{3} \mod 2^N\nonumber\\
 ={}& \text{log}_{\lambda_*}(\lambda_{1}) + 2^{N-1}[\alpha_2 2^{m_2}+\alpha_3 2^{m_3-1}] \mod 2^N.\nonumber
\end{align}
Clearly if $m_3 \geq 2$ then the expression in the square brackets above, say $Q$, is $0 \mod 2$ and $V_{2^{N-1}}$ is indeed a stable subspace of $\rho.$ If $m_3=1$ then $Q$ is not $0 \mod 2$ and $V_{2^{N-1}}$ is not a stable subspace of $\rho.$ Therefore $\rho$ is irreducible. So we have that in Case 8 there are no twist isoclasses. In Case 7 there are $2^{N-2}$ choices for $\lambda_4$, $(1-2^{-1})2^{N-1}$ choices for $\lambda_3,$ and $2^{N-1}$ choices for $\lambda_2.$

We now determine the behaviour of standardization in this case. It is easy to see that $\V(\rho|_{M_2})=V_{2^{s(\lambda_4)}}$ and thus $\V(\rho|_{M_3})$ is no smaller than $V_{2^{s(\lambda_4)}}$.

Let $N \geq 3;$ we calculate the case when $N=2$ separately later in the section. We write $\lambda_3,\lambda_4$ in terms of some $\lambda_* \in S_2^N \backslash S_2^{N-1}$ in the usual way, with $m_3=1$ and $m_4$ such that $2 \leq m_4 \leq N$. If $m_4=N$ then it is easy to show that $\V(\rho|_{M_3})=V_{2^{N-1}}$. Now assume $m_4 \neq N.$ As in Case 6, we use the power of Corollary \ref{minimalpower}. Consider $\Lambda:={}\log_{\lambda_*}(\lambda_{2,2^{N-k}+1})-\text{log}_{\lambda_*}(\lambda_{2,1})$ for $k$ such that $1 \leq k \leq m_4.$ Then

\begin{align}\label{wilco}
\Lambda ={}& \alpha_3 2 \cdot 2^{N-k} + \alpha_4 2^{m_4-1} 2^{N-k}(2^{N-k}+1) \mod 2^N\\
        ={}& 2^{N-k}[\alpha_3 2 + \alpha_4 2^{m_4-1}(2^{N-k}+1)]\nonumber
\end{align}
Let $Q$ be the terms in the square brackets above. We have that $Q = 0 \mod 2^k$ if and only if $\lambda_{2,1}=\lambda_{2,2^{N-k}+1}$ and thus $V_{2^{N-k}}$ is a proper stable subspace of $\langle y,x_2 \rangle.$ We break this computation into two subcases.

\textbf{Case 7.1} Assume $m_4 >2$.\\
It is clear that if $m_4 >2$ then, since $m_3=1$, by Equation \ref{wilco} the maximal $k$ such that $\Lambda= 0 \mod 2^N$ is when $k=1.$ Thus $\V(\rho|_{M_3})=V_{2^{N-1}}$. Note that $V_{2^{N-1}}$ is also minimal when $m_4=N.$
Let $s(\lambda_4)=M$ where $M \leq N-3.$ In this subcase there are $2^{N-1}$ choices for $\lambda_2,$ $(1-2^{-1})2^{N-1}$ choices for $\lambda_3$ and $2^{N-3}$ choices for $\lambda_4.$ By Lemma \ref{shout} we must divide by $2^{N-1}$ to take standardization into account. Thus, in this subcase, there are
\begin{equation}
2^{N-1}(1-2^{-1})2^{N-1}2^{N-3}2^{-(N-1)}=(1-2^{-1})2^{2N-4}
\end{equation}
twist isoclasses.

\textbf{Case 7.2} Assume $m_4=2$\\
If $m_4=2$ then we have that $Q=0 \mod 2^2$ and, since $\V(\rho|_{M_2})=V_{2^{N-2}},$ then by Corollary \ref{inclusionV} $\V(\rho|_{M_3})=V_{2^{N-2}}$. There are $2^{N-1}$ choices for $\lambda_2,$ $(1-2^{-1})2^{N-1}$ choices for $\lambda_3,$ and $(1-2^{-1})2^{N-2}$ choices for $\lambda_4.$ By Lemma \ref{shout} we must divide by $2^{N-2}$ to take standardization into account. Thus, in this subcase there are
\begin{equation}
(1-2^{-1})^22^{N-1}2^{N-1}2^{N-2}2^{-(N-2)}=(1-2^{-1})^2 2^{2N-2}
\end{equation}
twist isoclasses.

This ends the subcase distinctions.\\

Summing together these two subcases there are, for $N \geq 3,$
\begin{align}\label{m4case7}
(1-2^{-1})2^{2N-4}+(1-2^{-1})^2 2^{2N-2}=(1-2^{-1})2^{2N-4}(1+2^2(1-2^{-1}))
\end{align}
twist isoclasses.

Now assume $N=2.$ Then we have that $s(\lambda_4)=0.$ A short calculation shows that $\lambda_{2,1}=\lambda_{2,3}$ and by Corollary \ref{minimalpower} we have that $V_2$ is a minimal stable subspace. By Lemma \ref{shout} we must divide by $2$ to take standardization into account. There are $2^1$ choices for $\lambda_2,$ $(1-2^{-1})2^{1}=1$ choice for $\lambda_3,$ and $1$ choice for $\lambda_4.$ Thus, in this subcase there is
\begin{equation}
2\cdot1\cdot1\cdot2^{-1}=1
\end{equation}
twist isoclass.\\

\noindent This ends the case distinctions.\\

We now consider the case when $N=1.$ Note that, for clarity, we will call $\iota$ the square root of $-1.$ By Equation \ref{c} we have that $s(\lambda_4)\leq 1;$ that is, $\lambda_4 \in \{1,-1\}.$
If $\lambda_4=-1,$ then by Equation \ref{cc} we have that $\lambda_3 \in \{\iota, -\iota\}$ and by Equation \ref{ccc} we have that $\lambda_2 \in \{ \pm \sqrt{\iota}, \pm \sqrt{\iota}\}$ such that $\lambda_2^2 = -\lambda_3.$

If $\lambda_4=1$ then by Equation \ref{cc} we have that $\lambda_3 \in \{1,-1\}.$ If $\lambda_3=1$ then by Equation \ref{ccc} and, since $\rho$ is not the identity representation, we have that $\lambda_2=-1.$ If $\lambda_3=-1$ then by Equation \ref{ccc} we have that $\lambda_2 \in \{\iota,-\iota\}.$

A set of choices of the $\lambda_i$ gives us an irreducible representation if and only if $\lambda_{i,1} \neq \lambda_{i,2}$ holds for at least one $ 1 \leq i \leq 3;$ that is, one of the following is true:
\begin{align}\label{cN1}
\lambda_4 \neq 1\\
\lambda_3\lambda_4 \neq 1\\
\lambda_2\lambda_3\lambda_4 \neq 1.
\end{align}
It is easy to see that all of our choices of sets of $\lambda_i$ give us irreducible representations. For triples $(\lambda_4,\lambda_3,\lambda_2)$ it is easy to check that the pairs \linebreak
\scalebox{0.97}{$\left[(-1, \iota, \sqrt{\iota}),(-1,-\iota,-\sqrt{-\iota})\right], \left[(-1, \iota, -\sqrt{\iota}),(-1,-\iota,\sqrt{-\iota})\right], \left[(1,-1,\iota),(1,-1,-\iota)\right]$} \linebreak
 are standardization equivalent. Therefore we can say that
\begin{equation}\label{m4N1}
r_2(M_4)=4.
\end{equation}

We count the number of twist isoclasses for $N=2,3$ separately as well. Summing Cases 1 through 8 for $N=2,3$ we have $r_4(M_4)= 17$ and
$r_8(M_4)=70.$

We can now compute the $2$-local representation growth zeta function of $M_4$ by summing together the number of twist isoclasses from Cases 1 through 8:
\begin{align}
\zeta^{irr}_{M_{4},2}(s) ={}& 1 + 4\cdot 2^{-s}+ 17\cdot2^{-2s}+70\cdot2^{-3s} \\
&+\sum_{N=4}^{\infty} \left[(1-2^{-1})^42^{2N+3}+(1-2^{-1})^3 2^{2N}\right.\nonumber \\
&+ (1-2^{-1})2^{2N-2}+(1-2^{-1})^2 2^{2N-1} \nonumber\\
&+ (1-2^{-1})2^N(2^{N-2}+2^{N-4}-2^{-1})\nonumber\\
&+ \left.(1-2^{-1})2^{2N-4}(1+2^2(1-2^{-1}))\right]2^{-Ns}\nonumber.
\end{align}
\noindent Simplifying the expression above, we obtain
\begin{equation}
\zeta^{irr}_{M_{4},2}(s) = \frac{(1-2^{-s})^2}{(1-2^{1-s})(1-2^{2-s})}.
\end{equation}
Note that this result is the same as the zeta function for non-exceptional primes in \cite{MaxClassI}. It is then easy to check that this does satisfy the functional equation in \cite{Voll1}.

Now that we have the $p$-local representation zeta functions of $M_4$ we can now state the global representation zeta function:

\begin{equation}
\zeta^{irr}_{M_4}(s)= \dfrac{\zeta(s-1)\zeta(s-2)}{(\zeta(s))^2}.
\end{equation} 

\bibliographystyle{acm}
\bibliography{ThesisReferences}{}

\end{document}